\documentclass[12pt,reqno]{amsart}

\usepackage{a4wide}
\usepackage{amsmath}
\usepackage{psfrag}
\usepackage{amsthm}
\usepackage{amssymb,amsxtra}
\usepackage{appendix}
\usepackage[usenames]{color}
\usepackage{stmaryrd}
\usepackage{mathrsfs}
\usepackage{upgreek}
\usepackage{ytableau}
\usepackage{float}
\usepackage{comment}
\usepackage{tikz-cd}

\usepackage{paralist}
\usepackage{graphicx}
\usepackage[all]{xy}
\usepackage{tikz}
\usetikzlibrary{matrix,fit}
\usepackage{geometry}
\usetikzlibrary{positioning}
\usepackage{hyperref}
\usepackage{cleveref}
\usetikzlibrary{decorations.pathreplacing}

 \allowdisplaybreaks

\newcommand{\Z}{\mathbb{Z}}
\newcommand{\NN}{\mathbb{N}}

\newcommand{\Rect}{\mathsf{R}}
\newcommand{\sym}[1]{\mathfrak{S}_{#1}}

\newcommand{\F}{{F}}

\renewcommand{\i}{\mathring{\imath}}
\renewcommand{\j}{\mathring{\jmath}}

\renewcommand{\geq}{\geqslant}
\renewcommand{\leq}{\leqslant}
\renewcommand{\k}{\ell}

\newcommand{\block}{b}

\newcommand{\M}{\mathsf{M}}

\theoremstyle{definition}

\newtheorem{thm}{Theorem}[section]
\newtheorem{lem}[thm]{Lemma}
\newtheorem{defn}[thm]{Definition}
\newtheorem{prop}[thm]{Proposition}
\newtheorem{cor}[thm]{Corollary}

\newtheorem{eg}[thm]{Example}

\numberwithin{equation}{section}

\allowdisplaybreaks

\begin{document}
\title{Tensor Powers of Indecomposable Modules for $\F \sym{p}$}

\author{Manzu Kua}
\address[M. Kua]{Division of Mathematical Sciences, Nanyang Technological University, SPMS-04-01, 21 Nanyang Link, Singapore 637371.}
\email{s220025@e.ntu.edu.sg}

\author{Kay Jin Lim}
\address[K. J. Lim]{Division of Mathematical Sciences, Nanyang Technological University, SPMS-04-01, 21 Nanyang Link, Singapore 637371.}
\email{limkj@ntu.edu.sg}

\begin{abstract} In the earlier paper \cite{kualim1}, the authors gave an explicit tensor product formula, modulo projectives, for modules over the group algebra $\F\sym{p}$. In this paper, we use this formula to study tensor powers of such modules. In certain cases, we give combinatorial descriptions of the multiplicities of the indecomposable summands appearing in their direct-sum decompositions. We also study the asymptotic behaviour of these multiplicities as the underlying prime tends to infinity.

\end{abstract}

\subjclass[2010]{20C20, 20C30, 05A15}

\keywords{tensor power, symmetric group, stable Green ring}
\thanks{The second author is supported by Singapore Ministry of Education AcRF Tier 2 grant MOE-T2EP20225-0003.}

\maketitle

\section{Introduction} 

Computing tensor products of modules over a Hopf algebra is a notoriously difficult problem. Even for group algebras, there is in general no uniform method for decomposing a tensor product into indecomposable summands. In the semisimple case, this problem is closely related to the multiplication of irreducible characters. In the modular case, however, the situation becomes substantially more complicated, since group algebras often have infinitely many indecomposable modules and tensor products need not behave semisimply.

Let $F$ be an algebraically closed field of characteristic $p$. A classical example is given by cyclic groups in characteristic $p$. If $C_{p^s}$ is cyclic of order $p^s$, then the indecomposable modules over a field of characteristic $p$ are precisely the modules $F[x]/(x^i)$, for $1 \leq i \leq p^s$, equivalently the Jordan blocks of size at most $p^s$ \cite{Higman:1954}. The tensor product decomposition of two such modules was described by Srinivasan \cite{Srinivasan:1964}. More generally, Dade \cite{Dade:1966} classified the indecomposable modules for groups with a normal cyclic Sylow $p$-subgroup, and tensor products of these modules were studied by Feit \cite{Feit:1966} and Lindsey \cite{Lindsey:1974}. Janusz \cite{Janusz:1966} also constructed indecomposable modules for groups with cyclic Sylow $p$-subgroups. These results show that, for groups with cyclic Sylow $p$-subgroups, tensor product questions have a rich structure, although explicit decomposition formulas can still be subtle.

For the symmetric group, tensor product problems are already highly non-trivial in the ordinary case. The irreducible modules are the Specht modules, and the multiplicities occurring in tensor products of Specht modules are the Kronecker coefficients. These coefficients play an important role in the theory of symmetric functions, where tensor products correspond to the internal product, but they are notoriously difficult to compute. In the modular case, further complications arise. The blocks of the symmetric groups were classified by Brauer and Robinson \cite{Brauer:1947,robinsonproof}, confirming the Nakayama conjecture: two Specht modules lie in the same block if and only if their corresponding partitions have the same $p$-core. Moreover, Specht modules are no longer simple in general. Instead, the simple modules are parametrised by the $p$-regular partitions and occur as the heads of certain Specht modules constructed by James \cite{james}. Tensor products of modular Specht modules and simple modules remain poorly understood outside a small number of special cases, such as tensoring with the signature representation, where the Mullineux map describes the effect on simple modules \cite{Ford/Kleshchev:1997,Mullineux:1979}.

In \cite{kualim1}, the authors studied tensor products of indecomposable modules for the group algebra $\F\sym{p}$ modulo projectives. It is known that the non-projective indecomposable $\F\sym{p}$-modules are given by the Heller translates of the simple modules in the principal block of $\F\sym{p}$. More precisely, up to isomorphism, they are $\Omega^r(D_k)$, where $r,k\in [0,p-2]$ and $D_k$ is the simple module labelled by the hook partition $(p-k,1^k)$. Since $\sym{p}$ has a cyclic Sylow $p$-subgroup of order $p$, the general theory of modules with cyclic Sylow $p$-subgroups applies in principle. Nevertheless, obtaining explicit decompositions requires additional information. %In particular, in that paper, the authors showed that the tensor product of two simple modules is semisimple modulo projectives.

The purpose of the present paper is to exploit the tensor product formula for the study of tensor powers of these indecomposable $\F\sym{p}$-modules. While the tensor product of two indecomposable modules gives the basic multiplication rule, tensor powers lead naturally to an iterated decomposition problem. Namely, for an indecomposable module $V$, one may ask for the multiplicity of each indecomposable non-projective direct summand in
$V^{\otimes n}$. The formulas obtained in \cite{kualim1} reduce this question to a finite combinatorial problem governed by the tensor product rules in the stable Green ring of $\F\sym{p}$. 

More precisely, in Section \ref{S:Mk}, we introduce an auxiliary matrix $\M_k$ whose powers encode the information about the tensor powers of the simple module $D_k$ (or more generally, $\Omega^r(D_k)$ for any $r\in [0,2p-3]$). In Sections \ref{S:Path} and \ref{S:Wave}, we give the combinatorial descriptions for the multiplicities of indecomposable summands in the decomposition of the tensor powers of $\Omega^r(D_1)$, for $k=1$, and $\Omega^r(D_2)$, for $k=2$ respectively. These multiplicities are described in terms of the walks in the path graphs and wave sequences. In Section \ref{S:Stability}, we study the asymptotic behaviour of the multiplicities as $p$ tends to infinity.

\section{Preliminaries} 

We refer the reader to \cite{james,jk} for the representation theory of symmetric groups. For basic knowledge of the representation theory of finite-dimensional algebras, we refer the reader to \cite{alp,ben1}. Throughout, let $\F$ be an algebraically closed field of characteristic $p\geq 3$ and $\sym{n}$ be the symmetric group acting on $n$ elements. For integers $a\leq b$, we write $[a,b]$ for the integer interval between $a$ and $b$, that is, it consists of all integers $c$ such that $a\leq c\leq b$. 

Let $G$ be a finite group and $\F G$ be the group algebra. For an $\F G$-module $V$ and $r\in\Z$, $\Omega^r(V)$ is the $r$th Heller translate of $V$. For another $\F G$-module $W$, we write $V\oplus W$ for the direct sum and $V\otimes W$ for their tensor product (over $\F$) with the action defined by the coproduct of $\F G$ given by $\Delta(g)=g\otimes g$. Objects in the stable Green ring for the group algebra are denoted as $[V]$. Furthermore, we write $V\equiv W$ if $V\oplus P\cong W\oplus Q$ for some projective $\F G$-modules $P,Q$, that is, $[V]=[W]$. Finally, if $W$ is also indecomposable, we write $[V:W]$ for the multiplicity of $W$ as a direct summand of $V$. 

We now consider the symmetric group algebra $\F\sym{n}$. The simple modules for $\F\sym{n}$ are parametrised by $p$-regular partitions of $n$. In the literature, they are labelled by $D^\lambda$ and can be identified as the quotient of a certain Specht module by its unique maximal submodule. On the other hand, the block decomposition of the group algebra $\F\sym{n}$ is parametrised by the $p$-cores of partitions of $n$ (proved by Brauer and Robinson \cite{Brauer:1947,robinsonproof} but it is also known as the Nakayama Conjecture). More precisely, the simple modules $D^\lambda,D^\mu$ belong in the same block if and only if $\lambda,\mu$ have the same $p$-core. For example, in the case when $n<p$, a partition of $n$ is the same as its $p$-core and hence all simple modules are projective. 

In this paper, the objects of study are modules for $\F\sym{p}$. A partition $\lambda$ of $p$ is different from its $p$-core if and only if $\lambda$ is a hook of size $p$. In this case, the $p$-core of $\lambda$ is the empty partition $\varnothing$, $\lambda=(p-i,1^i)$ for some $i\in [0,p-2]$ and $D^\lambda$ belongs to the principal block $\block_0$ of $\F\sym{p}$. For this reason, we write $D_i$ for $D^\lambda$ if $\lambda=(p-i,1^i)$ and let $P_i$ be the projective cover of $D_i$. Moreover, we have the following theorem. 

\begin{thm}
  A complete set of representatives of the isomorphism classes of indecomposable $\F\sym{p}$-modules is given by \[\{\Omega^r(D_j):r,j\in[0,p-2]\}\cup \{P_j:j\in [0,p-2]\}\cup \{D^\mu:\mu\not\in \block_0\}.\]
\end{thm}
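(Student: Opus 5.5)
We first split $\F\sym{p}$ into its blocks and treat them separately, since every indecomposable module lies in exactly one block. By the Nakayama conjecture as recalled above, a partition $\mu$ of $p$ that is not a hook equals its own $p$-core. It therefore lies in a block of defect zero. Such a block is a full matrix algebra over $\F$, because $\F$ is algebraically closed. Its only indecomposable module is the simple module $D^\mu$, which is projective. This produces the third set in the statement, and these modules are pairwise non-isomorphic because they are distinct simples.

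The real work is in the principal block $\block_0$. Its defect group is a Sylow $p$-subgroup $C_p=\langle(1\,2\,\cdots\,p)\rangle$, which is cyclic of order $p$. The normaliser is $N=C_p\rtimes C_{p-1}$, and the inertial index is $e=p-1$. We then invoke the theory of blocks with cyclic defect groups (Dade, Janusz; see \cite{alp}) in the following order.
\begin{enumerate}
\item[(a)] $\block_0$ is a Brauer tree algebra with $e=p-1$ edges and exceptional multiplicity $(p-1)/e=1$. The edges correspond to the simples $D_0,\dots,D_{p-2}$. The Brauer tree is the line (the hook Specht modules $S^{(p-i,1^i)}$ are the vertices, and $S^{(p-i,1^i)}$ has composition factors $D_{i-1},D_i$ by Peacock/James). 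Hence $\block_0$ has exactly $p-1$ simple modules, consistent with the count of hooks.
\item[(b)] Via Green correspondence with $N$, the non-projective indecomposable modules in $\block_0$ correspond bijectively to the non-projective indecomposable $\F N$-modules. These are uniserial of length $1,\dots,p-1$ with $p-1$ possible heads. This gives exactly $(p-1)^2$ isomorphism classes.
\item[(c)] For a Brauer tree algebra with $e$ edges and multiplicity $m$, $\Omega$ is periodic of period $2e$ on the non-projective modules. We then check that the modules $\Omega^r(D_j)$ for $r,j\in[0,p-2]$ are pairwise non-isomorphic.
\end{enumerate}

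Since there are $(p-1)^2$ such modules, by (b) they exhaust the non-projective indecomposables. The remaining indecomposables in $\block_0$ are the projective ones. These are the $P_j$, which are pairwise non-isomorphic since their heads $D_j$ differ.

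The main obstacle is step (c), the injectivity of $(r,j)\mapsto\Omega^r(D_j)$ on $[0,p-2]^2$. The first tool is the Green walk around the line-shaped Brauer tree. Because the tree is a line with $m=1$, the module $\Omega^r(D_j)$ is uniserial or biserial with an explicit head and socle, computed by walking along the tree. The second tool is a length computation. On the line, $\Omega(D_j)$ is the radical of $P_j$, which is uniserial-like with two branches. Iterating, the composition length of $\Omega^r(D_j)$ is determined by $r$, for $r\in[0,p-2]$, through the distance travelled along the tree, while the head determines $j$.

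Failing that, we can pass to $\F N$ via Green correspondence, which commutes with $\Omega$. The Green correspondent of $D_j$ is a uniserial $\F N$-module, and for such a module, $\Omega$ is explicit: $\Omega(U)$ for uniserial $U$ of length $\ell$ is uniserial of length $p-\ell$, with its head twisted by the character of $C_{p-1}$ on $C_p$. Tracking length and head through $r\le p-2$ steps then shows that the $(p-1)^2$ correspondents are distinct, and this finishes the proof.
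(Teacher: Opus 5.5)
The defect-zero blocks, the identification of $\block_0$ as a Brauer tree algebra on a line, and the count of exactly $(p-1)^2$ non-projective indecomposables via Green correspondence with $N=C_p\rtimes C_{p-1}$ are all fine. (The hook decomposition result is due to Peel. The paper gives no proof of this theorem and quotes it as standard cyclic-defect theory.)

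The genuine gap is step (c), which is the real content of the statement, and neither of your tools proves it. The first tool rests on false claims. Walking along the line gives $\Omega^r(D_0)\cong[D_r/D_{r-1}]$ for $1\le r\le p-2$, meaning the uniserial module with head $D_r$ and socle $D_{r-1}$.
\begin{itemize}
\item The head of $\Omega^r(D_j)$ therefore does not determine $j$.
\item The length does not determine $r$: it is constantly $2$ here, but $\Omega(D_1)$ has length $3$ for $p\ge 5$.
\item Even length and head together do not separate the modules. For $1\le r\le p-3$, the modules $\Omega^r(D_0)\cong[D_r/D_{r-1}]$ and $\Omega^{p-2-r}(D_{p-2})\cong[D_r/D_{r+1}]$ agree in both; for example, $\Omega(D_0)$ and $\Omega^2(D_3)$ when $p=5$.
\end{itemize}
Your fallback via $N$ has the right framework. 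However, ``tracking length and head'' proves nothing until you know the lengths and heads of the Green correspondents of the $D_j$, and you never determine them.

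The missing idea is that $D_j$ and $D_{p-2-j}$ lie in the same $\Omega$-orbit, exactly half a period apart: $\Omega^{p-1}(D_j)\cong D_{p-2-j}$, and the period is exactly $2p-2$. The paper records this immediately after the statement. This is why $r$ only needs to range over $[0,p-2]$: the pairs $(r,j)$ and $(r,p-2-j)$ together sweep the orbit $\{\Omega^s(D_j):s\in[0,2p-3]\}$ exactly once. Nothing in your proposal sees this coincidence. It suffices to prove three things.
\begin{itemize}
\item[(1)] The orbits of $D_0,\dots,D_{(p-3)/2}$ are pairwise distinct. Since $\dim D_j=\binom{p-2}{j}\equiv(-1)^j(j+1)\pmod p$, the restriction $D_j\downarrow_{C_p}$ is a single Jordan block $J_L$ plus a free module, with $\{L,p-L\}=\{j+1,p-1-j\}$. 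This unordered pair is an $\Omega$-invariant because $\Omega(J_L)\cong J_{p-L}$.
\item[(2)] Each orbit has size $2p-2$. On the $N$-side, $\Omega$ sends length $L$ to $p-L\ne L$. Also $\Omega^2$ twists the head by the character of $C_{p-1}$ acting on $C_p$, which is faithful of order $p-1$.
\item[(3)] $\Omega^{p-1}(D_j)\cong D_{p-2-j}$. The walk above gives $\Omega^{p-1}(D_0)\cong D_{p-2}=\mathrm{sgn}$. Using $D_j\otimes\mathrm{sgn}\cong D_{p-2-j}$, we get $\Omega^{p-1}(D_j)\equiv\Omega^{p-1}(D_0)\otimes D_j\cong D_{p-2-j}$. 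Both sides are indecomposable and non-projective, so they are isomorphic.
\end{itemize}
Combined with your count from (b), these three facts close the proof.
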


The module $D^\mu$ with $\mu\not\in \block_0$ is simple projective. So the stable Green ring of the group algebra $\F\sym{p}$ has a $\Z$-basis $\{[\Omega^r(D_j)]:r,j\in[0,p-2]\}$. In fact, $D_j$ has period $2p-2$ and $\Omega^{p-1}(D_j)\cong D_{p-2-j}$. In the earlier paper \cite{kualim1}, the authors gave an explicit decomposition formula for tensor products of indecomposable $\F\sym{p}$-modules modulo projectives. To describe the formula, we need the following notation.

Consider the grid of squares with the horizontal (respectively, vertical) lines labelled by $0,1,\ldots,p$ from top to bottom (respectively, $0,1,\ldots,p-2$ from left to right). Each grid point is labelled by $(a,b)$ if it belongs to the horizontal and vertical lines labelled by $a$ and $b$ respectively. For $j \in [0, p-2]$, consider the rectangle with vertices the grid points $(0,j)$, $(j,0)$, $(p-2,p-2-j)$ and $(p-2-j,p-2)$. We call it the $j$-rectangle. Furthermore, let 
\begin{align*}
	\ell_{i,j} &= \left \{\begin{array}{ll}
		j-i &\text{if $i \in [0,j]$,} \\ 
		i-j&\text{if $i \in [j+1, p-1]$,} 
	\end{array}\right . \\
	r_{i,j} &= \left \{\begin{array}{ll}
		j+i &\text{if $i \in [0, p-2-j]$,}\\
		2p-j-4-i&\text{if $i \in [p-j-1, p-1]$.}
	\end{array}\right .
\end{align*} The numbers $\ell_{i,j},r_{i,j}$ are the left and right boundaries of the $j$-rectangle at level $i$. Finally, we let \[\Rect(i,j)=\{\ell_{i,j}+2k:k\in [0,(r_{i,j}-\ell_{i,j})/2]\},\] that is, $\Rect(i,j)$ consists of all the grid points of the $j$-rectangle at level $i$. 

\begin{figure}[H]
\[\scalebox{0.5}{
\begin{tikzpicture}
\foreach \i in {2,3}
{\draw[dashed] (\i,8) -- (\i,-1);
\draw[dashed] (\i+3,8) -- (\i+3,3.5);
\draw[dashed] (\i+5,8) -- (\i+5,3.5);
\draw[dashed] (\i+11,-6) -- (\i+11,1);
\draw[dashed] (\i+6,-6) -- (\i+6,-3.5);}

\draw[dashed] (10,-6) -- (10,-3.5);
\draw[dashed] (12,-6) -- (12,1);

\foreach \i in {0,1,2}
{\draw[dashed] (9,\i+5) -- (0,\i+5);
\draw[dashed] (4.5,\i) -- (0,\i);
\draw[dashed] (11.5,\i-2) -- (16,\i-2);}

\draw[dashed] (7,-5)--(16,-5);
%\draw[dashed] (7,\i-7) -- (16,\i-7);}

\draw[dashed] (0,3) -- (4.5,3);
\draw[dashed] (7,-4) -- (16,-4);

\foreach \i in {2,3,5,6,7}
{\draw[thick] (\i,\i)--(\i+7,\i-7);}

\foreach \j in {0,1,2,6,7}
{\draw[thick] (\j+2,-\j+2)--(\j+7,-\j+7);}

\draw[very thick,color=red] (7,7)--(2,2)--(9,-5)--(14,0)--cycle;

\foreach \k in {1,2.5,4,5}
{\node at (\k+5,\k-2) {\rotatebox{-45}{\Large$\cdots$}};}

\foreach \l in {0,1,6}
{\node at (\l+4.5,-\l+3.5) {\rotatebox{45}{\Large$\cdots$}};}

%\draw[fill=gray!80] (1,1)--(2,0)--(3,1)--(2,2)--cycle;
%\draw[fill=gray!80] (13,-1)--(14,-2)--(15,-1)--(14,0)--cycle;

\foreach \i/\label in {1/0,2/1,4/j-2,5/j-1,6/j,7/j+1}
{\node at (\i+1,9) {\rotatebox{90}{$\label$}};}

\foreach \i/\label in {8/p-3-j,9/p-2-j,10/p-1-j,12/p-4,13/p-3,14/p-2}
{\node at (\i,-7) {\rotatebox{90}{$\label$}};}

\foreach \k/\label in {7/0, 6/1, 5/2, 3/j-1, 2/j, 1/j+1, 0/j+2}
{\node at (-1,\k) {$\label$};}

\foreach \k/\label in {-5/p-2, -4/p-3, -2/p-j, -1/p-1-j, 0/p-2-j}
{\node at (17,\k) {$\label$};}

\foreach \i/\j in {7/7, 6/6, 8/6, 5/5, 9/5, 3/3, 2/2, 3/1, 4/0, 8/-4, 9/-5, 10/-4, 12/-2, 13/-1, 14/0, 13/1}
{\node at (\i,\j) {\textcolor{blue}{\huge{$\bullet$}}};}

\node at (-1,4) {$\vdots$};
\node at (17,-3) {$\vdots$};
\node at (4,9) {$\cdots$};
\node at (11,-7) {$\cdots$};
\end{tikzpicture}}
\]
\caption{$j$-diagram}
\label{Fig:diagram}
\end{figure}

We end this section with the following tensor product formula in the stable Green ring of $\F\sym{p}$. 

\begin{thm}[{\cite[Theorem 4.2]{kualim1}}]\label{T:StableGreenRing} Let $i,j,k,\ell\in [0,p-2]$. Then $\Omega^k(D_i)\otimes \Omega^\ell(D_j)\equiv \bigoplus_{t\in\Rect(i,j)}\Omega^{k+\ell}(D_t)$.
\end{thm}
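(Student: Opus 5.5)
The plan is to reduce the statement to a computation for the normaliser of a Sylow $p$-subgroup, where the indecomposable modules and their tensor products are explicit. \textbf{Step 1.} Heller translates commute with tensor products modulo projectives: $\Omega^k(V)\otimes\Omega^\ell(W)\equiv\Omega^{k+\ell}(V\otimes W)$. It therefore suffices to prove
\[D_i\otimes D_j\equiv\bigoplus_{t\in\Rect(i,j)}D_t.\]
Both sides live in the principal block modulo projectives, because summands from other blocks are simple projective.

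\textbf{Step 2.} Let $P=C_p\leq\sym{p}$ be a Sylow $p$-subgroup and $N=N_{\sym{p}}(P)\cong C_p\rtimes C_{p-1}$. Since $P$ is a trivial-intersection subgroup, restriction $\mathrm{Res}^{\sym{p}}_N$ induces a stable equivalence between the principal blocks, namely the Green correspondence. Restriction is a tensor functor, so this stable equivalence is monoidal. Hence it suffices to compute $\mathrm{Res}_N D_i\otimes\mathrm{Res}_N D_j$ modulo projectives in $\F N$.

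The indecomposable $\F N$-modules are uniserial, of the form $U_{a,\chi}$, with length $a\in[1,p]$ and a socle character $\chi$ of $C_{p-1}$. The composition factors go up the chain $\chi,\chi\omega,\chi\omega^2,\ldots$, where $\omega$ is the character by which $C_{p-1}$ acts on $P$. The first task is to identify $\mathrm{Res}_N D_i$ modulo projectives. I would use $D_i\cong\Lambda^i D_1$, with $D_1$ the $(p-2)$-dimensional heart of the natural permutation module. Then $\mathrm{Res}_N D_1\equiv U_{p-2,\chi_1}$ for an explicit $\chi_1$, and taking exterior powers of Jordan blocks should give $\mathrm{Res}_N D_i\equiv U_{\min(i+1,\,p-1-i),\,\chi}$ with an explicit $\chi$. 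The length formula is to be confirmed via $\Omega^{p-1}(D_i)\cong D_{p-2-i}$ and $\Omega(U_a)\cong U_{p-a}$. Combining this with the $\Omega$-action on $N$-modules also identifies every $\Omega^r(D_t)$ as a specific $U_{a,\chi}$.

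\textbf{Step 3.} Tensor the uniserial modules using Srinivasan's formula for $C_p$, refined by weights. If $a+b\leq p$, then
\[U_a\otimes U_b\cong\bigoplus_{m=0}^{\min(a,b)-1}U_{a+b-1-2m}.\]
If $a+b>p$, the top $a+b-p$ summands become projective and the rest are as above. One then tracks socle characters by looking at lowest weight vectors. Finally, translate each summand back through the dictionary of Step 2. The summands come in steps of length $2$, which should produce the step-$2$ set $\Rect(i,j)$. The lower boundary $\ell_{i,j}=|i-j|$ comes from the smallest summand. The fold in $r_{i,j}$ at $i=p-1-j$ comes from the truncation when $a+b>p$, together with the identification $U_{p-a}\leftrightarrow\Omega$-shift.

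The main obstacle I expect is this bookkeeping. One must pin down precisely which $U_{a,\chi}$ corresponds to each $\Omega^r(D_t)$, in particular the socle characters and the correct parity of the $\Omega$-shift. One must also check that the surviving summands in the case $a+b>p$ land on untwisted $D_t$, and not on $\Omega^{p-1}$-translates, exactly matching the rectangle boundaries. I would verify the dictionary first on $D_1\otimes D_j$, where $D_1\otimes D_j\equiv D_{j-1}\oplus D_{j+1}$ should hold. I would then argue by induction on $i$, using the exterior power and Pieri-type relations, as a fallback if the direct weight-tracking becomes unwieldy.
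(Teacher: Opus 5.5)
The paper does not prove this statement; it imports it from \cite{kualim1}, so your argument has to stand on its own. The framework is a legitimate route: Green correspondence for the TI Sylow subgroup, monoidality of restriction, and Srinivasan's rule refined by $C_{p-1}$-characters. However, the one concrete input you give for the dictionary in Step 2 is false.

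The formula $\mathrm{Res}_N D_i\equiv U_{\min(i+1,p-1-i),\chi}$ already contradicts your own (correct) $\mathrm{Res}_N D_1\equiv U_{p-2,\chi_1}$ at $i=1$. In general, $\mathrm{Res}_P D_i$ is one Jordan block of size less than $p$ plus free summands. Since $\dim D_i=\binom{p-2}{i}\equiv(-1)^i(i+1)\pmod p$, that block has size $i+1$ for $i$ even and $p-1-i$ for $i$ odd; for $p=5$ your formula is wrong for both $D_1$ and $D_2$. Your proposed check via $\Omega^{p-1}(D_i)\cong D_{p-2-i}$ cannot catch this, because both formulas are invariant under $i\mapsto p-2-i$.

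The error propagates to Step 3. With all lengths at most $\frac{p-1}{2}$ you would always have $a+b<p$, so the truncation to which you attribute the fold of $r_{i,j}$ would never occur. With the correct lengths and $i,j$ of opposite parity, it is the largest surviving summand that gives $|i-j|$, and the smallest one, $U_{|a-b|+1}$, that produces the fold.

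The clean repair is to normalise by $\Omega^i$. For every $i\in[0,p-2]$, the Green correspondent of $\Omega^i(D_i)$ is the uniserial module of length $i+1$ with trivial head, namely $\F P/\mathrm{rad}^{i+1}(\F P)$ with $C_{p-1}$ acting by conjugation. Write $U_{c,\theta}$ for the uniserial module of length $c$ with head $\theta$, and $\omega$ for the character of $C_{p-1}$ on $\mathrm{rad}(\F P)/\mathrm{rad}^2(\F P)$. Then $\Omega^2(U_{c,\theta})\cong U_{c,\theta\omega}$, and
\[
U_{a,\theta}\otimes U_{b,\psi}\equiv\bigoplus_{m=\max(0,a+b-p)}^{\min(a,b)-1}U_{a+b-1-2m,\,\theta\psi\omega^m}.
\]
Take $a=i+1$, $b=j+1$ and $t=i+j-2m$. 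The summand $U_{t+1,\omega^m}$ is then the correspondent of $\Omega^{2m}\Omega^t(D_t)=\Omega^{i+j}(D_t)$, and $t$ runs exactly over $\Rect(i,j)$. In this normalisation your Step 3 heuristics become literally true: the lower boundary comes from the smallest summand, and the fold comes from the truncation.

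The real content is then the head characters, which your plan does not yet supply.
\begin{itemize}
\item Dimension and composition-factor counts cannot detect them modulo projectives; for example, all $U_{p-1,\theta}$ have the same composition factors.
\item Verifying against $D_1\otimes D_j\equiv D_{j-1}\oplus D_{j+1}$ is circular unless that case is proved independently.
\end{itemize}
That case can be proved independently. Let $V=D_1$, which carries a nondegenerate invariant symmetric form. Then $\Lambda^{j+1}V$ and $\Lambda^{j-1}V$ split off $V\otimes\Lambda^jV$, since the relevant scalars are $j+1$ and $p-1-j$. By Srinivasan's rule, the complement restricts freely to $P$. The induction $U_{2,\mathbf 1}\otimes U_{j+1,\mathbf 1}\cong U_{j+2,\mathbf 1}\oplus U_{j,\omega}$ then pins down the dictionary. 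You still need to prove the character-refined tensor rule itself, including the truncated range $a+b>p$, where composition factors alone are again ambiguous.
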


\section{The matrix $\M_k$}\label{S:Mk}

In this section, we define the matrix $\M_k$ we shall be studying throughout this paper. Its powers $\M_k^b$ encode the information about the decomposition of the even tensor powers $D_k^{\otimes 2b}$ of the simple module $D_k$ (see Theorem \ref{T:PowersofMk} below). For odd powers of $D_k$, we can make use of Proposition \ref{P:reduction}. 

To begin, we make the following simple observations using the $j$-rectangle. 

\begin{lem}\label{L:Rect}
    Let $i,j\in [0,p-2]$. 
    \begin{enumerate}[(i)]
        \item If both $i,j$ have the same parity, then $\Rect(i,j)$ consists of contiguous even integers.
        \item If $i,j$ have opposite parities, then $\Rect(i,j)$ consists of contiguous odd integers.
    \end{enumerate} In particular, for $2r,2s\in [0,p-2]$, we have $[D_{2r}\otimes D_{2s}:D_\ell]=0$ for any odd integer $\ell\in [0,p-2]$.
\end{lem}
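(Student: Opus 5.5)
The plan is to compute the parity of the left endpoint $\ell_{i,j}$ directly from the definition, and then check that the right endpoint $r_{i,j}$ has the same parity, so that the interval makes sense. By definition $\Rect(i,j)=\{\ell_{i,j}+2k\}$, which is an arithmetic progression of step $2$ starting at $\ell_{i,j}$. Hence every element of $\Rect(i,j)$ has the parity of $\ell_{i,j}$, and the elements are contiguous within that parity class. So it suffices to show that $\ell_{i,j}\equiv i+j \pmod 2$.

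\emph{Left endpoint.} In both cases of the definition, $\ell_{i,j}=\pm(j-i)$, and so $\ell_{i,j}\equiv i+j\pmod 2$.

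\emph{Right endpoint.} For $r_{i,j}$ there are again two cases:
\begin{itemize}
\item if $i\in[0,p-2-j]$, then $r_{i,j}=i+j$;
\item if $i\in[p-j-1,p-1]$, then $r_{i,j}=2p-4-(i+j)\equiv i+j\pmod 2$.
\end{itemize}
Thus $r_{i,j}-\ell_{i,j}$ is even, so $(r_{i,j}-\ell_{i,j})/2$ is an integer and $\Rect(i,j)$ is the full set of integers of parity $i+j$ in $[\ell_{i,j},r_{i,j}]$. For this to be nonempty we also need $\ell_{i,j}\le r_{i,j}$. This holds for $i,j\in[0,p-2]$ by the geometry of the $j$-rectangle, and the description of $\Rect(i,j)$ as the grid points of the $j$-rectangle at level $i$ already presupposes it. If one wants it explicitly, it is a short case check. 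For example, when $i\le j$ and $i\le p-2-j$ we have $j-i\le j+i$. The remaining cases use $i,j\le p-2$ to give $i-j\le 2p-4-i-j$, that is, $i\le p-2$.

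Parts (i) and (ii) then follow immediately:
\begin{itemize}
\item if $i$ and $j$ have the same parity, then $i+j$ is even and all elements of $\Rect(i,j)$ are even;
\item if $i$ and $j$ have opposite parities, then $i+j$ is odd and all elements are odd.
\end{itemize}

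For the final assertion, apply Theorem~\ref{T:StableGreenRing} with $k=\ell=0$. This gives $D_{2r}\otimes D_{2s}\equiv\bigoplus_{t\in\Rect(2r,2s)}D_t$, where every $t$ is even by (i). The modules $D_t$ are non-projective and pairwise non-isomorphic. Projective summands contribute no $D_\ell$ summands, and by Krull--Schmidt the multiplicity $[D_{2r}\otimes D_{2s}:D_\ell]$ can be read off from the stable decomposition. It is therefore $0$ for odd $\ell$.

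I expect no real obstacle. The only point needing care is confirming that $\ell_{i,j}$ and $r_{i,j}$ have matching parity, so that $\Rect(i,j)$ is a well-defined progression.
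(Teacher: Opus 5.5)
Your proposal is correct and follows the paper's approach. The paper simply says (i) and (ii) are obvious from the definition of the $j$-rectangle and deduces the last assertion from Theorem~\ref{T:StableGreenRing}. You make the same argument explicit through the parity check $\ell_{i,j}\equiv r_{i,j}\equiv i+j \pmod 2$ and the Krull--Schmidt reading of multiplicities modulo projectives.
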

\begin{proof}
    Parts (i) and (ii) are obvious from the definition of $j$-rectangle. The final assertion follows from Theorem \ref{T:StableGreenRing}.
\end{proof}

The following proposition allow us to focus only on the even powers of $D_k$.  
%In the decomposition of the $n$th tensor powers $D_k^{\otimes n}$ of $D_k$, the occurrences of the simple modules as direct summands depend on the parities of $n$ and $k$. To keep the presentation simple, we have the following proposition which allow us to focus only on the case when $n$ is even. 

\begin{prop}\label{P:reduction} Let $k\in [0,p-2]$ and $n\in\NN_0$. Suppose that $D^{\otimes n}_k\equiv \bigoplus_{i=0}^{p-2}a_iD_i$. Then \[D^{\otimes (n+1)}_k
\equiv \bigoplus_{j=0}^{p-2}b_j D_j\] where $b_j=\sum_{j\in\Rect(k,i)} a_i$, that is, the sum of the $a_i$'s such that $j$ belongs to $\Rect(k,i)$. In particular, 
\begin{enumerate}[(i)]
    \item if $k$ or $n$ is even, then $a_i=0$ for every odd integer $i$;
    \item if both $k$ and $n$ are odd, then $a_i=0$ for every even integer $i$. 
\end{enumerate}
\end{prop}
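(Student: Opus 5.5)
The recursion is a direct consequence of the stable tensor product formula. I would write $D_k^{\otimes(n+1)} = D_k^{\otimes n}\otimes D_k$. Tensoring with $D_k$ preserves the relation $\equiv$, because a projective module tensored with any module is again projective. Substituting the hypothesis therefore gives
\[
D_k^{\otimes(n+1)}\equiv\bigoplus_{i=0}^{p-2}a_i\,(D_i\otimes D_k)\equiv \bigoplus_{i=0}^{p-2}a_i\,(D_k\otimes D_i).
\]
The second step uses the symmetry of the tensor product. Now apply Theorem \ref{T:StableGreenRing} with Heller shifts $0$: this gives $D_k\otimes D_i\equiv\bigoplus_{t\in\Rect(k,i)}D_t$. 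Collecting terms, the coefficient of $D_j$ is the sum of those $a_i$ with $j\in\Rect(k,i)$, which is $b_j$. The modules $D_j$, for $j\in[0,p-2]$, are pairwise non-isomorphic non-projective indecomposables, so by Krull--Schmidt these coefficients are well defined. This proves the displayed recursion.

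The parity statements (i) and (ii) I would prove by induction on $n$, using Lemma \ref{L:Rect}.

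\emph{Base case.} For $n=0$ we have $D_k^{\otimes 0}=\F=D_0$, the trivial module $D^{(p)}$. So $a_0=1$ and all other $a_i$ vanish. Here $n$ is even, and (i) holds trivially.

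\emph{Inductive step.} Suppose the claim holds for $n$. If $b_j\neq0$, then $j\in\Rect(k,i)$ for some $i$ with $a_i\neq 0$. By Lemma \ref{L:Rect}, $j\equiv k+i \pmod 2$. I would then check the cases.
\begin{itemize}
\item If $k$ is even, the inductive hypothesis (case (i)) says every $i$ with $a_i\ne0$ is even. Hence $j$ is even, so (i) holds for $n+1$.
\item If $k$ is odd and $n$ is even, every such $i$ is even, so $j$ is odd. This is (ii) for $n+1$, which is now odd.
\item If $k$ and $n$ are both odd, every such $i$ is odd, so $j$ is even. This is (i) for $n+1$, which is now even.
\end{itemize}

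There is no real obstacle here. The only points needing care are the identification $D_k^{\otimes 0}\cong D_0$, and noting that the parity rule $j\equiv k+i \pmod 2$ for $j\in\Rect(k,i)$ is exactly the content of Lemma \ref{L:Rect} (i) and (ii). Finally, I would note that Lemma \ref{L:Rect} is stated for $\Rect(i,j)$ symmetrically in its two arguments, so it applies to $\Rect(k,i)$ as needed.
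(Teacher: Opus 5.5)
Your proposal is correct and follows the paper's proof. The recursion comes directly from Theorem~\ref{T:StableGreenRing} applied to $D_k\otimes D_i$, and parts (i) and (ii) follow by induction on $n$ from the parity rule $j\equiv k+i \pmod 2$ for $j\in\Rect(k,i)$, which is Lemma~\ref{L:Rect}. Your single base case $n=0$ with a three-way case split is a slightly tidier arrangement of the paper's induction, which checks $n=0,1$ separately when $k$ is odd. Your remarks on why tensoring preserves $\equiv$ and on Krull--Schmidt also fill in details the paper leaves implicit.
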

\begin{proof}
    By Theorem \ref{T:StableGreenRing}, we have 
    \begin{align*}
        D^{\otimes (n+1)}_k&\equiv D_k\otimes \bigoplus_{i=0}^{p-2}a_iD_i\equiv \bigoplus_{i=0}^{p-2}a_i\bigoplus_{j\in\Rect(k,i)}D_j=\bigoplus_{j=0}^{p-2}\left (\sum_{j\in\Rect(k,i)} a_i\right ) D_j.
    \end{align*} We now prove parts (i) and (ii) by induction. Assume first that $k$ is even. Let $j$ be an odd integer. For $j\in \Rect(k,i)$, it is necessary that $i$ is odd by Lemma \ref{L:Rect}. So $b_j=\sum_{j\in\Rect(k,i)} a_i=0$. Suppose now that $k$ is odd. The conclusion is clear when $n=0,1$. If $n$ is even, by induction hypothesis, $a_i=0$ for every odd integer $i$. Suppose that $j$ is even. Similarly as before, for $j\in \Rect(k,i)$, it is necessary that $i$ is odd. So $b_j=0$. The case for $n$ is odd is similar. 
\end{proof}

In view of Proposition \ref{P:reduction}, in order to understand the decomposition of the tensor powers of $D_k$, we can assume that $n$ is even. For $n$ is even, suppose that 
\begin{equation}\label{Eq:Dkn}
D_k^{\otimes n}\equiv \bigoplus_{0\leq 2j\leq p-3}s_{n,2j}^{(k)}D_{2j}.
\end{equation} On the other hand, using Theorem \ref{T:StableGreenRing}, we have \[D_k^{\otimes 2}\equiv \bigoplus_{t\in\Rect(k,k)} D_t\equiv \bigoplus_{i=0}^m D_{2i}\] where $m=\min\{k,p-2-k\}$. Therefore,  
\begin{equation}\label{Eq:Dkn+2} D_k^{\otimes (n+2)}\cong D_k^{\otimes 2}\otimes D_k^{\otimes n}\equiv \bigoplus_{i=0}^mD_{2i}\otimes \bigoplus_{0\leq 2j\leq p-3}s_{n,2j}^{(k)}D_{2j}\equiv \bigoplus s_{n,2j}^{(k)} [D_{2i}\otimes D_{2j}:D_{2\ell}]D_{2\ell}
\end{equation} where the last equality follows from Lemma \ref{L:Rect} and the sum for the last item runs over all $2j,2\ell\in [0,p-3]$ and $i\in [0,m]$. In view of Equations \ref{Eq:Dkn} and \ref{Eq:Dkn+2}, we define the following notation. 

\begin{defn}\label{D:Mvnk}
    Let $k\in [0,p-2]$, $m=\min\{k,p-2-k\}$ and $n\in\NN_0$ be even. Let $v_n^{(k)}$ be the column matrix (see Equation \ref{Eq:Dkn}) 
    \[v_n^{(k)}=\begin{bmatrix}
            s_{n,0}^{(k)}\\ s_{n,2}^{(k)}\\ \vdots\\ s_{n,p-3}^{(k)} 
    \end{bmatrix}.\] We can simplify the notation of $s_{n,2j}^{(k)}$ and $v_n^{(k)}$ by suppressing the superscript. Furthermore, let $\M_k$ be the matrix with both row and column labelled by $[0,\frac{p-3}{2}]$ such that, for each $\ell,j\in [0,\frac{p-3}{2}]$, the $(\ell,j)$-entry of $\M_k$ is \[(\M_k)_{\ell,j}=\sum_{i=0}^m[D_{2i}\otimes D_{2j}:D_{2\ell}].\] For example, the top left entry of $\M_k$ is called the $(0,0)$-entry. More explicitly, the entry $(\M_k)_{\ell,j}$ is the total number of points in the column of the $2j$-rectangle labelled by $2\ell$ and above (and including) the $2m$th layer. 
\end{defn}

The following is an example illustrating how we read off the entries of the matrix $\M_k$.

\begin{eg} Let $p=7$ and $k=1$. We have the following $2j$-rectangles one for each $j\in[0,2]$. For each $j\in [0,2]$, we have marked the points in its column labelled by 0 (respectively, 2 and 4) by blue (respectively, red and green) above (and including) its second layer.  
\[\scalebox{0.5}{\begin{tikzpicture}

\foreach \i in {0,1,2,3,4,5} {
	\draw[dashed] (\i,1)--(\i,-6);
	\node at (\i,2) {$\i$};
}

\foreach \i in {0,1,2,3,4,5} {
	\draw[dashed] (-1,-\i)--(6,-\i);
	\node at (-2,-\i) {$\i$};
}

\foreach \i in {0,1,2,3,4,5} {
	\filldraw (\i,-\i) circle (4pt);
}

\draw[red,thick] (2,-2) circle (7pt);
\draw[blue,thick] (0,0) circle (7pt);
\end{tikzpicture}}  \qquad \scalebox{0.5}{\begin{tikzpicture}

\foreach \i in {0,1,2,3,4,5} {
	\draw[dashed] (\i,1)--(\i,-6);
	\node at (\i,2) {$\i$};
}

\foreach \i in {0,1,2,3,4,5} {
	\draw[dashed] (-1,-\i)--(6,-\i);
	\node at (-2,-\i) {$\i$};
}

\foreach \i in {0,1,2,3} {
	\filldraw (\i+2,-\i) circle (4pt);
	\filldraw (\i+1, -\i-1) circle (4pt);
	\filldraw (\i, -\i-2) circle (4pt);
}

\draw[red,thick] (2,0) circle (7pt);
\draw[red,thick] (2,-2) circle (7pt);
\draw[blue,thick] (0,-2) circle (7pt);
\draw[green,thick] (4,-2) circle (7pt);
\end{tikzpicture}}  \qquad \scalebox{0.5}{\begin{tikzpicture}

\foreach \i in {0,1,2,3,4,5} {
  \draw[dashed] (\i,1)--(\i,-6);
  \node at (\i,2) {$\i$};
}

\foreach \i in {0,1,2,3,4,5} {
  \draw[dashed] (-1,-\i)--(6,-\i);
  \node at (-2,-\i) {$\i$};
}

\foreach \i in {0,1,2,3,4} {
  \filldraw (\i,\i-4) circle (4pt);
  \filldraw (\i+1, \i-5) circle (4pt);
}

\draw[red,thick] (2,-2) circle (7pt);
\draw[green,thick] (4,-2) circle (7pt);
\draw[green,thick] (4,0) circle (7pt);
\end{tikzpicture}}\]

Therefore, the matrix $\M_1$ is given as follows:
\[\M_1 = \begin{bmatrix}
	1 & 1 & 0 \\
	1 & 2 & 1 \\
	0 & 1 & 2
\end{bmatrix}.\]
\end{eg}

\begin{lem}\label{L:MkSymm}
  The matrix $\M_k$ is symmetric.
\end{lem}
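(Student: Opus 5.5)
Writing $(\M_k)_{\ell,j}=\sum_{i=0}^m [D_{2i}\otimes D_{2j}:D_{2\ell}]$, it suffices to prove that for all even $a,b,c\in[0,p-3]$
\[
[D_a\otimes D_b:D_c]=[D_a\otimes D_c:D_b].
\]
Summing this over $a=2i$, $i\in[0,m]$, with $b=2j$ and $c=2\ell$, gives $(\M_k)_{\ell,j}=(\M_k)_{j,\ell}$. By Theorem \ref{T:StableGreenRing}, $[D_a\otimes D_b:D_c]$ is $1$ if $c\in\Rect(a,b)$ and $0$ otherwise, since the points of $\Rect(a,b)$ are distinct. So the claim reduces to a purely combinatorial statement: for even $a,b,c$,
\[
c\in\Rect(a,b)\iff b\in\Rect(a,c).
\]

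I would prove this by writing $\Rect(a,b)$ explicitly. From the definitions of $\ell_{a,b}$ and $r_{a,b}$ we get $\ell_{a,b}=|a-b|$ (this covers both cases, including $a\in[b+1,p-1]$). We also get $r_{a,b}=\min\{a+b,\,2p-4-a-b\}$: indeed $a+b\le 2p-4-a-b$ exactly when $a+b\le p-2$, which matches the case split in $r_{i,j}$. By Lemma \ref{L:Rect}, $\Rect(a,b)$ consists of the integers of the same parity as $a+b$ between these bounds. Hence, for $c$ of the right parity,
\[
c\in\Rect(a,b)\iff |a-b|\le c\le a+b \text{ and } a+b+c\le 2p-4.
\]
These are the Clebsch--Gordan-type conditions. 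The parity requirement $a+b+c$ even holds automatically here, since $a,b,c$ are all even.

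The last step is to check that these conditions are symmetric in $b$ and $c$. The inequality $|a-b|\le c\le a+b$ is equivalent to the three triangle inequalities $c\le a+b$, $a\le b+c$ and $b\le a+c$, which form a set symmetric in $a,b,c$. The condition $a+b+c\le 2p-4$ is symmetric as well. Hence $c\in\Rect(a,b)$ if and only if $b\in\Rect(a,c)$, and this proves the lemma.

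The main obstacle is the careful translation of $\ell_{i,j}$ and $r_{i,j}$ into the form $|a-b|$ and $\min\{a+b,2p-4-a-b\}$, in particular checking the boundary cases $i=p-2-j$ and $i=p-1-j$. Once that is done, the symmetry is immediate.
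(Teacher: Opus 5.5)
Your proof is correct. It uses the same reduction as the paper: via Theorem \ref{T:StableGreenRing}, the lemma becomes the statement that $2\ell\in\Rect(2i,2j)$ if and only if $2j\in\Rect(2i,2\ell)$, up to the order of the arguments of $\Rect$. You differ from the paper in how you verify this combinatorial symmetry.

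The paper first uses $D_{2i}\otimes D_{2j}\cong D_{2j}\otimes D_{2i}$ to read both memberships inside the single $2i$-rectangle: one is level $2j$, column $2\ell$, and the other is level $2\ell$, column $2j$. It then observes from a picture that the grid diagonal through $(0,0)$, which swaps the two grid coordinates, is a line of symmetry of that rectangle.

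You instead compute $\ell_{a,b}=|a-b|$ and $r_{a,b}=\min\{a+b,2p-4-a-b\}$. This gives the explicit truncated Clebsch--Gordan description of $\Rect(a,b)$: $a+b+c$ even, the three triangle inequalities, and $a+b+c\leq 2p-4$. That description is visibly invariant under all permutations of $a,b,c$. Your case check on the definitions, including the boundary cases $i=p-2-j$ and $i=p-1-j$, is correct.

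What your route buys:
\begin{itemize}
\item It is rigorous without relying on a figure.
\item It does not need commutativity of the tensor product.
\item It proves more: $[D_a\otimes D_b:D_c]$ is totally symmetric in $a,b,c$ for all $a,b,c\in[0,p-2]$, not only for even ones.
\end{itemize}
The paper's reflection argument is shorter and more visual, but it is only the $b\leftrightarrow c$ part of the symmetry your inequalities exhibit.
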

\begin{proof}
   Let $\ell,j\in [0,\frac{p-3}{2}]$ and $m=\min\{k,p-2-k\}$. By definition, we aim to prove that, for any $i\in [0,m]$, \[[D_{2i}\otimes D_{2j}:D_{2\ell}]=[D_{2i}\otimes D_{2\ell}:D_{2j}].\] Since $D_{2i}\otimes D_{2j}\cong D_{2j}\otimes D_{2i}$ (and similar for the other one), we aim to check that $2\ell\in \Rect(2j,2i)$ if and only if $2j\in\Rect(2\ell,2i)$. For this, we consider grid points with $2i$-rectangle (in blue in the figure below). The point $(2j,2\ell)$ is the image of the reflection of $(2\ell,2j)$ about the diagonal line passing through $(0,0)$ (see the red line). The line is also the line of symmetry of the (grid points of the) $2i$-rectangle. Therefore, $(2j,2\ell)$ belongs to the $2i$-rectangle if and only if $(2\ell,2j)$ belongs. So $2\ell\in \Rect(2j,2i)$ if and only if $2j\in\Rect(2\ell,2i)$.
\[\small{
\begin{tikzpicture}[xscale=.35,yscale=.35]
\fill[blue,rounded corners=1mm,opacity=0.2] (0,0)--(4,4)--(10,-2)--(6,-6)--(0,0)--(4,4)--cycle;

\draw[thick,red] (0,4)--(10,-6);
\node at (0,4) {$\bullet$};
\node at (4,4) {$\bullet$};
\node at (0,0) {$\bullet$};
\draw[dashed] (0,4)--(10,4);
\draw[dashed] (0,4)--(0,-6);

\node at (4,5) {$(0,2i)$};
\node at (-2,0) {$(2i,0)$};
\node at (-2,4) {$(0,0)$};
\node at (5,-3) {$\bullet$};
\node at (5,-3.8) {$(2j,2\ell)$};
\node at (7,-1) {$\bullet$};
\node at (7,-.2) {$(2\ell,2j)$};
\end{tikzpicture}}\]
\end{proof}

The next theorem shows that powers of the matrix $\M_k$ encodes information about the decomposition of the tensor powers of $\Omega^r(D_k)$. For this, we need the following easy lemma. 

\begin{lem}\label{L:vnk}
    Let $k\in [0,p-2]$ and $n\in\NN_0$ be even. Then $v_{n+2}^{(k)}=\M_kv_n^{(k)}$.
\end{lem}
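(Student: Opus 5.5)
The plan is to compare the coefficients in Equations \ref{Eq:Dkn} and \ref{Eq:Dkn+2} and then use the uniqueness of decompositions in the stable Green ring. The first step is to fix the meaning of $v_n^{(k)}$. For even $n$, Proposition \ref{P:reduction}(i) shows that $D_k^{\otimes n}$ has no summands $D_i$ with $i$ odd, regardless of the parity of $k$. So $D_k^{\otimes n}$ is stably isomorphic to the direct sum in Equation \ref{Eq:Dkn}, and $v_n^{(k)}$ records all of its non-projective summands. The same holds for $n+2$.

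Next, write $D_k^{\otimes(n+2)}\cong D_k^{\otimes 2}\otimes D_k^{\otimes n}$ and substitute two decompositions:
\begin{itemize}
\item $D_k^{\otimes 2}\equiv\bigoplus_{i=0}^m D_{2i}$, which follows from Theorem \ref{T:StableGreenRing} because $\Rect(k,k)=\{0,2,\ldots,2m\}$ with $m=\min\{k,p-2-k\}$ (from $\ell_{k,k}=0$ and $r_{k,k}=2m$);
\item $D_k^{\otimes n}\equiv\bigoplus_j s_{n,2j}D_{2j}$, which is Equation \ref{Eq:Dkn}.
\end{itemize}
Tensor products distribute over direct sums, and tensoring with a projective module gives a projective module. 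So $\equiv$ is compatible with $\otimes$, and we get
\[D_k^{\otimes(n+2)}\equiv\bigoplus_{i=0}^m\bigoplus_{j}s_{n,2j}\,D_{2i}\otimes D_{2j}.\]
By Lemma \ref{L:Rect}, each $D_{2i}\otimes D_{2j}$ is stably a sum of modules $D_{2\ell}$, occurring with multiplicity $[D_{2i}\otimes D_{2j}:D_{2\ell}]$. This is exactly Equation \ref{Eq:Dkn+2}.

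Now compare this with Equation \ref{Eq:Dkn} for $n+2$. The classes $[\Omega^r(D_j)]$ form a $\Z$-basis of the stable Green ring, and by the Krull--Schmidt theorem a module modulo projectives determines the multiplicities of its non-projective summands. Hence
\[s_{n+2,2\ell}=\sum_j\Big(\sum_{i=0}^m[D_{2i}\otimes D_{2j}:D_{2\ell}]\Big)s_{n,2j}=\sum_j(\M_k)_{\ell,j}\,s_{n,2j}.\]
This is the $\ell$-th entry of $\M_kv_n^{(k)}$, which proves the lemma.

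There is no real obstacle here. The only points needing care are that $\equiv$ respects tensor products, and that multiplicities are well defined modulo projectives; both follow from Krull--Schmidt together with the closure of projectives under tensoring.
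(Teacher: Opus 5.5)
Your proposal is correct and follows the same route as the paper, which simply reads the identity off from Equation \ref{Eq:Dkn+2} together with Definition \ref{D:Mvnk}. You additionally spell out what the paper leaves implicit: that $\equiv$ is compatible with $\otimes$, and that multiplicities of non-projective summands are well defined modulo projectives by Krull--Schmidt.
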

\begin{proof}
    This follows immediately using Equation \ref{Eq:Dkn+2} and our notation in Definition \ref{D:Mvnk}.
\end{proof}

\begin{thm}\label{T:PowersofMk}
     Let $k\in [0,p-2]$, $b\in\NN_0$ and $r\in [0,2p-3]$. We have \[(\Omega^r(D_k))^{\otimes 2b} \equiv \bigoplus_{0\leq 2\ell\leq p-3} (\M_k^b)_{\ell,0}\Omega^{2br}(D_{2\ell}).\] 
\end{thm}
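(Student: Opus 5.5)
The plan is to reduce first to the case $r=0$ and then handle that case by induction on $b$ using Lemma \ref{L:vnk}.

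\emph{Reduction to $r=0$.} Theorem \ref{T:StableGreenRing} only states a formula for $\Omega^k(D_i)\otimes\Omega^\ell(D_j)$ with $k,\ell\in[0,p-2]$. I would first extend it to arbitrary integer shifts. The modules $D_i$ have period $2p-2$, and $\Omega^{p-1}(D_j)\cong D_{p-2-j}$. So every $\Omega^r(D_k)$ with $r\in[0,2p-3]$ is isomorphic to $\Omega^{r'}(D_{k'})$ for some $r'\in[0,p-2]$, where either $k'=k$ or $k'=p-2-k$.

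Alternatively, I would use the standard fact $\Omega^a(V)\otimes\Omega^c(W)\equiv\Omega^{a+c}(V\otimes W)$, valid for all integers $a,c$. This gives directly
\[(\Omega^r(D_k))^{\otimes 2b}\equiv\Omega^{2br}\bigl(D_k^{\otimes 2b}\bigr).\]
It is cleaner to cite this fact than to fight the index conventions. If the paper intends to rely only on Theorem \ref{T:StableGreenRing}, I would instead induct on the number of tensor factors, applying the theorem at each step. The shifts add, and $\Omega^{k+\ell}$ is read modulo the period $2p-2$.

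Since $\Omega$ commutes with direct sums and preserves the relation $\equiv$, it then suffices to prove the case $r=0$:
\[D_k^{\otimes 2b}\equiv\bigoplus_{0\leq 2\ell\leq p-3}(\M_k^b)_{\ell,0}D_{2\ell}.\]

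\emph{Case $r=0$.} By Proposition \ref{P:reduction}(i), $D_k^{\otimes 2b}$ has only even-indexed summands. This is what makes the vector $v_{2b}^{(k)}$ of Equation \ref{Eq:Dkn} well defined and a complete record of the decomposition. Lemma \ref{L:vnk} gives $v_{2b}^{(k)}=\M_k v_{2b-2}^{(k)}$, so by induction $v_{2b}^{(k)}=\M_k^b v_0^{(k)}$. For the base case, $D_k^{\otimes 0}$ is the trivial module $D_0$, so $v_0^{(k)}=e_0$, the first standard basis vector. Therefore $s_{2b,2\ell}^{(k)}=(\M_k^b e_0)_\ell=(\M_k^b)_{\ell,0}$, which is the claim.

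\emph{Main obstacle.} The only delicate step is the compatibility of shifts, namely justifying $\Omega^a(V)\otimes W\equiv\Omega^a(V\otimes W)$ for $a$ outside $[0,p-2]$. Here I would use the identity $\Omega^{p-1}(D_j)\cong D_{p-2-j}$ together with the periodicity $2p-2$. Everything else is a direct application of the earlier results.
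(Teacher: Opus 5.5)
Your proposal is correct and follows the paper's proof. You use Lemma \ref{L:vnk} and induction from $v_0^{(k)}=e_0$ to identify $v_{2b}^{(k)}$ with the $0$th column of $\M_k^b$. You then pass to general $r$ using $(\Omega^r V)\otimes(\Omega^r W)\equiv\Omega^{2r}(V\otimes W)$ and the fact that $\Omega$ commutes with direct sums modulo projectives.

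The step you flag as delicate is not actually delicate. Over any group algebra, $\Omega^a(V)\otimes W\equiv\Omega^a(V\otimes W)$ holds for every integer $a$: tensor $0\to\Omega V\to P\to V\to 0$ with $W$ and note that $P\otimes W$ is projective. So you need no appeal to $\Omega^{p-1}(D_j)\cong D_{p-2-j}$ or to periodicity.
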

\begin{proof}
    Using Lemma \ref{L:vnk} and induction, we have $v_{2b}^{(k)}=\M_k^bv_0^{(k)}$ where $v_0^{(k)}=\begin{bmatrix}
        1&0&\cdots&0
    \end{bmatrix}^\top$. So $v_{2b}^{(k)}$ is the first column of $\M_k^b$. The definition of $v_{2b}^{(k)}$ and Equation \ref{Eq:Dkn} now give us the desired statement. We obtain the last assertion because, modulo projectives, the Heller translate $\Omega^r$ commutes with direct sum and $\Omega^{2r}(V\otimes W)\equiv (\Omega^r V)\otimes (\Omega^r W)$.  
\end{proof}

We now give an example illustrating our theorem.

\begin{eg}\label{Eg:p7D12}
    Let $p=7$. Using Theorem \ref{T:StableGreenRing}, we have
    \begin{align*} 
    D_1^{\otimes 2} &\equiv D_0 \oplus D_2,& D_2^{\otimes 2}&\equiv D_0\oplus D_2\oplus D_4, \\
    D_1^{\otimes 4} &\equiv 2D_0 \oplus 3 D_2 \oplus D_4,& D_2^{\otimes 4}&\equiv 3D_0\oplus 6D_2\oplus 5D_4, \\
    D_1^{\otimes 6} &\equiv 5 D_0 \oplus 9 D_2 \oplus 5 D_4,& D_2^{\otimes 6}&\equiv 14D_0\oplus 31D_2\oplus 25D_4.
    \end{align*} On the other hand, 
    \begin{align*}
	\M_{1} &= \begin{bmatrix} 1 & 1 & 0 \\ 1 & 2 & 1 \\ 0 & 1 & 2 \end{bmatrix},& \M_1^2 &= \begin{bmatrix} 2 & 3  & 1\\3 & 6 & 4 \\ 1 & 4 & 5 \end{bmatrix}, &\M_1^3 &= \begin{bmatrix} 5 & 9 & 5 \\ 9 & 19 &  14 \\ 5 & 14 & 14 \end{bmatrix},\\ 
	\M_{2} &= \begin{bmatrix} 1 & 1 & 1 \\ 1 & 3 & 2 \\ 1 & 2 & 2 \end{bmatrix},& \M_2^2 &= \begin{bmatrix} 3 & 6  & 5 \\ 6 & 14 & 11 \\ 5 & 11 & 9 \end{bmatrix},&\M_2^3 &= \begin{bmatrix} 14 & 31 & 25 \\ 31 & 70 &  56 \\ 25 & 56 & 45 \end{bmatrix}.
	\end{align*}
\end{eg}

As an application of the theorem, we have the following corollary. %In this statement, for an $\F\sym{p}$-module $V$, we write $[V]$ for the object in the stable Green ring $\F\sym{p}$.

\begin{cor}\label{C:charpoly}
Let $k\in [0,p-2]$ and $f(X)$ be the characteristic polynomial of $\M_k$. Then $[D_k]$ satisfies $f(X^2)$. In particular, the unique minimal monic polynomial $g(X)$ such that $[D_k]$ satisfies divides $f(X^2)$. 
\end{cor}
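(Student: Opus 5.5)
The plan is to apply Cayley--Hamilton to $\M_k$ and transfer the resulting identity to the stable Green ring through Theorem~\ref{T:PowersofMk} (with $r=0$). Write $N=\frac{p-1}{2}$, the size of $\M_k$, and let $f(X)=\sum_{b=0}^{N} c_bX^b$ with $c_N=1$. Cayley--Hamilton gives $f(\M_k)=\sum_b c_b\M_k^b=0$. Taking the $(\ell,0)$-entry for each $\ell\in[0,\frac{p-3}{2}]$ yields $\sum_b c_b(\M_k^b)_{\ell,0}=0$. Here the $b=0$ term is read via $\M_k^0=I$, which matches $D_k^{\otimes 0}=D_0$, the trivial module, since $v_0^{(k)}=[1,0,\dots,0]^\top$.

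Next we compute in the stable Green ring. By Theorem~\ref{T:PowersofMk}, $[D_k]^{2b}=\sum_{\ell}(\M_k^b)_{\ell,0}[D_{2\ell}]$ for every $b\ge 0$. Hence
\[ f([D_k]^2)=\sum_{b}c_b[D_k]^{2b}=\sum_{\ell}\Big(\sum_b c_b(\M_k^b)_{\ell,0}\Big)[D_{2\ell}]=0. \]
We are using that $[D_0]$ is the identity of the stable Green ring, which holds because $D_0$ is trivial. We are also using that the product in the stable Green ring is induced by $\otimes$, so that $[D_k]^{2b}=[D_k^{\otimes 2b}]$. This shows that $[D_k]$ satisfies $f(X^2)$.

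For the statement about $g$, I would first argue that the set of polynomials in $\mathbb{Q}[X]$ (or $\Z[X]$) annihilating $[D_k]$ is an ideal. It is the kernel of the evaluation homomorphism $X\mapsto[D_k]$ into the stable Green ring tensored with $\mathbb{Q}$, so it is principal and generated by the minimal monic polynomial $g$. Since $f(X^2)$ is monic and lies in this ideal, $g$ divides $f(X^2)$.

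The only real subtlety is this last step: over $\Z$, divisibility by a monic generator needs care. I would handle it by working over $\mathbb{Q}$, where the stable Green ring is a finite-dimensional commutative algebra with basis $\{[\Omega^r(D_j)]\}$, so the minimal polynomial exists and is unique. I would then note that Gauss's lemma makes $g$ integral, because it divides the monic integer polynomial $f(X^2)$. The rest is routine.
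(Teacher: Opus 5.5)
Your proposal is correct and is essentially the paper's proof. Both apply Cayley--Hamilton to $\M_k$ and use Theorem~\ref{T:PowersofMk} to read off, entry by entry, that the coefficient of each $[D_{2\ell}]$ in $f([D_k]^2)$ vanishes. The paper uses the $(0,\ell)$-entry together with symmetry, while you use the $(\ell,0)$-entry directly. Your argument for the divisibility claim about $g$ is sound: you pass to the stable Green ring tensored with $\mathbb{Q}$, take the monic generator of the annihilating ideal, and invoke Gauss's lemma to get $g\in\Z[X]$. This fills in a step the paper leaves unstated.
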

\begin{proof} Suppose that $f(X)=X^n+c_{n-1}X^{n-1}+\cdots+c_1X+c_0$ where $c_0,c_1,\ldots,c_{n-1}\in\Z$. Let $c_n=1$. By Theorem \ref{T:PowersofMk}, we only need to check for the coefficient of $[D_{2\ell}]$ in $f([D_k^{\otimes 2}])$. For any $2\ell\in [0,p-2]$, by Theorem \ref{T:PowersofMk}, we have 
\begin{align*}
  ([D_k^{\otimes 2n}]+\cdots+c_1[D_k^{\otimes 2}]+c_0[D_k^0]: [D_{2\ell}])&=\sum_{i=0}^nc_i([D_k^{\otimes 2i}]:[D_{2\ell}])=\sum_{i=0}^nc_i(\M_k^{i})_{0,\ell}\\
  &=f(\M_k)_{0,\ell}=0.
\end{align*} Therefore, $f([D_k^{\otimes 2}])$ is virtually the zero module. %Since $\Omega^r$ commutes with tensor product and direct sum up to projectives, we also have $f([\Omega^r(D_k)^{\otimes 2}])$ is virtually zero. The final assertion is obvious. 
\end{proof}

We end this section with an example. 

\begin{eg}
  Let $p=7$. As in Example \ref{Eg:p7D12}, we also have 
  \begin{align*}
    D_1^{\otimes 3}\equiv 2D_1 \oplus D_3, \quad D_1^{\otimes 5}\equiv 5D_1\oplus 4D_3\oplus D_5.
  \end{align*} The unique minimal monic polynomial satisfied by $[D_1]$ is precisely \[f(X^2)=(X^2)^3-5(X^2)^2+6(X^2)-1\] where $f(X)=X^3-5X^2+6X-1$ is the characteristic polynomial of $\M_1$. So Corollary \ref{C:charpoly} is optimal. 
\end{eg}

\section{Walks on a Path Graph}\label{S:Path}

In the previous section, we showed that the entries of the first row (or column) of the matrix $\M_k^b$ give the multiplicities in the direct sum decomposition of $(\Omega^r(D_k))^{\otimes 2b}$. In this section, we examine particularly $\M_1^b$. More precisely, we give combinatorial descriptions for the entries of $\M_1^b$ and infer that for $(\Omega^r(D_1))^{\otimes 2b}$. The combinatorial object is the following path graph. 

\begin{defn}
    Let $r$ be a positive integer. A path graph $\mathsf{P}_r$ on $r$ vertices is the following graph: \[\mathsf{P}_r = \begin{tikzpicture}
		\foreach \i/\j in {0, 1, 2, 4, 5} {
			\filldraw[black] (\i,0) circle (3pt);
		}
		\draw[-] (0,0)--(2.5,0);
		\draw[-] (5,0)--(3.5,0);
		\draw[dashed] (2.5,0)--(3.5,0);
		\node at (0,0.5) {$1$};
		\node at (1,0.5) {$2$};
		\node at (2,0.5) {$3$};
		\node at (3.8,0.5) {$r-1$};
		\node at (5.2,0.5) {$r$};
	\end{tikzpicture}\] A walk in $\mathsf{P}_r$ is a sequence $(s_0,s_1,\ldots,s_\ell)$ of vertices of $\mathsf{P}_r$ such that $s_t$ and $s_{t+1}$ are adjacent, i.e., $|s_t-s_{t+1}|=1$ for all $t\in [0,\ell]$. The vertices $s_0,s_\ell$ are called the starting and ending vertices of the walk respectively. The number $\ell$ is called the length of the walk. 
\end{defn}

\begin{lem}\label{L:m1paths} Let $p$ be odd, $b$ be a positive integer and $i,j\in [0,\frac{p-3}{2}]$. Then the $(i,j)$-entry of $\M_1^b$ is the number of walks of length $2b$ in the path graph $\mathsf{P}_{p-1}$ starting and ending at $2i+1$ and $2j+1$ respectively. 
\end{lem}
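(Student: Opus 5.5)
Let $A$ denote the adjacency matrix of $\mathsf{P}_{p-1}$, with rows and columns indexed by $[1,p-1]$. By the standard fact that $(A^{n})_{s,t}$ counts walks of length $n$ from $s$ to $t$, the claim amounts to the identity $(\M_1^b)_{i,j}=(A^{2b})_{2i+1,2j+1}$. We will prove this by showing that $\M_1$ is the restriction of $A^2$ to the odd-indexed rows and columns, and then using that this restriction behaves well under powers.

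\textbf{Step 1: compute $\M_1$.} Here $m=\min\{1,p-3\}=1$, so $(\M_1)_{\ell,j}=[D_0\otimes D_{2j}:D_{2\ell}]+[D_2\otimes D_{2j}:D_{2\ell}]$. We read these off Theorem~\ref{T:StableGreenRing} using the rectangle boundaries. Since $\Rect(0,2j)=\{2j\}$, the first term is $\delta_{\ell,j}$. For the second term, the definitions give $\ell_{2j,2}$ and $r_{2j,2}$ directly, which yields:
\begin{itemize}
\item $\Rect(2j,2)=\{2j-2,2j,2j+2\}$ for $1\leq j\leq \frac{p-5}{2}$;
\item $\Rect(0,2)=\{2\}$;
\item $\Rect(p-3,2)=\{p-5,p-3\}$, using $r=2p-2-4-(p-3)=p-3$.
\end{itemize}
This is valid for $p\geq5$; the case $p=3$ is the $1\times1$ matrix $[1]$ and is checked directly. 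Hence $\M_1$ is tridiagonal, with off-diagonal entries $1$ and diagonal $(1,2,\ldots,2,2)$. This agrees with the example for $p=7$.

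\textbf{Step 2: compare with $A^2$.} In $\mathsf{P}_{p-1}$, $(A^2)_{s,t}$ counts length-$2$ walks from $s$ to $t$. We have $(A^2)_{s,s\pm2}=1$ whenever both endpoints lie in $[1,p-1]$, and $(A^2)_{s,s}=\deg(s)$. On the odd vertices $2i+1$ with $i\in[0,\frac{p-3}{2}]$:
\begin{itemize}
\item vertex $1$ has degree $1$;
\item vertex $p-2$ has degree $2$, because vertex $p-1$ exists;
\item every other odd vertex has degree $2$.
\end{itemize}
So the odd-indexed submatrix $B$ of $A^2$ equals $\M_1$ under $i\leftrightarrow 2i+1$.

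\textbf{Step 3: pass to powers.} A length-$2$ walk changes parity by $0$, so $A^2$ has no entries between odd and even vertices. It is therefore block diagonal with respect to the decomposition into odd and even vertices. Consequently $(A^{2})^b$ restricted to the odd vertices equals $B^b=\M_1^b$, which gives $(\M_1^b)_{i,j}=(A^{2b})_{2i+1,2j+1}$, the number of walks of length $2b$ from $2i+1$ to $2j+1$.

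The only real care lies in Step 1, namely evaluating the boundaries at $i=0$ and $i=p-3$ (the corners of the $2$-rectangle) and handling the small prime $p=3$ separately. The rest is routine.
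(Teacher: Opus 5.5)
Your proof is correct and follows essentially the same route as the paper: both identify $\M_1$ with the matrix counting length-$2$ walks between the odd vertices of $\mathsf{P}_{p-1}$, and then pass to powers. The paper passes to powers by an explicit induction on $b$, splitting a walk of length $2b+2$ according to its last two steps. You instead use the parity block-diagonal structure of $A^2$, which avoids the boundary case analysis. You also derive the tridiagonal form of $\M_1$ from the rectangle boundaries, which the paper only asserts.
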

\begin{proof} Suppose that $p=3$. In this case, $\M_1^b=\begin{bmatrix}
    1
\end{bmatrix}$. On the other hand, there is a unique walk of length $2b$ from $1$ to $1$ in $\mathsf{P}_2$. Suppose now that $p\geq 5$. By definition of $\M_1$, we have \[\M_1=\begin{bmatrix}
		1&1&0&\cdots&0&0&0\\ 1&2&1&\cdots& 0&0&0\\ 0&1&2&\cdots&0&0&0\\
		0&0&1&\cdots&0&0&0\\ \vdots&\vdots&\vdots&&\vdots&\vdots&\vdots\\ 0&0&0&\cdots&2&1&0\\ 0&0&0&\cdots&1&2&1\\ 0&0&0&\cdots&0&1&2
	\end{bmatrix}.\]  Let $a_{i,j}(b)$ be the number of walks of length $2b$ starting at the vertex $2i+1$ and ending at the vertex $2j+1$. So we aim to prove that $(\M_1^b)_{i,j}=a_{i,j}(b)$. We proceed via induction on $b$. 
	
	Suppose that $b=1$. Let $i,j\in [0,\frac{p-3}{2}]$. If
$i\neq 0$, then there are exactly two walks of length $2$ starting and ending in $2i+1$, namely $(2i+1, 2i+2, 2i+1)$ and $(2i+1, 2i, 2i+1)$. If $i=0$, then there is exactly one such walk, $(1, 2, 1)$. Thus, it is clear that the diagonal entries $(\M_1)_{i,i}$ are indeed $a_{i,i}(1)$. For the off-diagonal entries, note that if $|i-j|>1$, then there are no walks of length $2$ from $2i+1$ to $2j+1$. If $|i-j|=1$, then there is exactly one such path. This shows that the claim is true for the base case $b=1$.

    Suppose now that the induction hypothesis is correct for some positive integer $b$.  Considering a walk $s=(s_0,\ldots,s_{2b},s_{2b+1},s_{2b+2})$ such that $s_0=2i+1$ and $s_{2b+2}=2j+1$, we have the following (mutually exclusive) cases:
	\begin{enumerate}[(i)]
		\item $s_{2b}=2j+3$ and $s_{2b+1}=2j+2$ (only if $j \neq \frac{p-3}{2}$),
		\item $s_{2b} = 2j+1$ and $s_{2b+1}=2j+2$,
		\item $s_{2b} = 2j+1$ and $s_{2b+1} = 2j$ (only if $j \neq 0$),
		\item $s_{2b}=2j-1$ and $s_{2b+1}=2j$ (only if $j \neq 0$).
	\end{enumerate} There are several cases, depending on whether $j=0$ or $j=\frac{p-3}{2}$, or neither. We consider one case and leave the rest to the reader. If $j\neq 0,\frac{p-3}{2}$, then we have
	\begin{align*}
		(\M_1^{b+1})_{i,j}&=\sum_{t=0}^{\frac{p-3}{2}}(\M_1^b)_{i,t}(\M_1)_{t,j}= (\M_1^b)_{i,j-1}+2(\M_1^b)_{i,j}+(\M_1^b)_{i,j+1} \\
		&= a_{i,j-1}(b) + 2a_{i,j}(b) + a_{i,j+1}(b)= a_{i,j}(b+1),
	\end{align*} where the final equality above follows from the four listed cases. 
    \end{proof}
    
We give a simple example when $p=5$. In turns out that, the entries of $\M_1^b$ are also Fibonacci numbers. 

\begin{eg}
    Suppose that $p=5$. We have $\M_1=\begin{bmatrix}
        1&1\\ 1&2
    \end{bmatrix}$. Let $f_b$'s be the Fibonacci numbers where $f_0=0$ and $f_1=1$. We use the convention $f_{-1}=1$. It is easy to check that \[\M_1^b=\begin{bmatrix}
        f_{2b-1}&f_{2b}\\ f_{2b}& f_{2b+1}
    \end{bmatrix}.\] Together with Lemma \ref{L:m1paths}, we see that the Fibonacci numbers are numbers of certain walks of certain lengths in $\mathsf{P}_4$. 
\end{eg}

We are now ready to state and prove the result for this section. 

\begin{thm}\label{T:D1powers}
    Let $\ell\in [0,\frac{p-3}{2}]$, $b\geq 1$ and $r\in [0,2p-3]$. We have \[(\Omega^r(D_1))^{\otimes 2b}\equiv \bigoplus_{0\leq 2\ell\leq p-3}m_\ell \Omega^{2br}(D_{2\ell})\] where $m_\ell$ is the number of walks of length $2b$ starting and ending in the vertices $2\ell+1$ and 1 respectively in the path graph $\mathsf{P}_{p-1}$. In particular, $[D_1^{\otimes 2b} : D_0]$ is the number of Dyck paths of length $2b$ with restricted height $p-2$ (see \cite[A080934]{OEIS}).
\end{thm}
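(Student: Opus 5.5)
The plan is to combine Theorem \ref{T:PowersofMk} with Lemma \ref{L:m1paths}. We begin by applying Theorem \ref{T:PowersofMk} with $k=1$:
\[(\Omega^r(D_1))^{\otimes 2b}\equiv \bigoplus_{0\leq 2\ell\leq p-3}(\M_1^b)_{\ell,0}\,\Omega^{2br}(D_{2\ell}).\]
So it suffices to identify $(\M_1^b)_{\ell,0}$ with $m_\ell$. Lemma \ref{L:m1paths}, with $i=\ell$ and $j=0$, says that $(\M_1^b)_{\ell,0}$ is the number of walks of length $2b$ in $\mathsf{P}_{p-1}$ starting at $2\ell+1$ and ending at $1$, which is exactly $m_\ell$. (Alternatively, by Lemma \ref{L:MkSymm} the matrix $\M_1^b$ is symmetric, so the order of the endpoints does not matter; reversal of walks gives the same conclusion.) This proves the first assertion.

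For the final assertion, take $r=0$ and $\ell=0$. Then $[D_1^{\otimes 2b}:D_0]=m_0$ is the number of closed walks of length $2b$ in $\mathsf{P}_{p-1}$ from vertex $1$ to vertex $1$. We then set up the standard bijection with Dyck paths. To a walk $(s_0,\ldots,s_{2b})$ with $s_0=s_{2b}=1$, associate the lattice path whose $t$th step is an up-step if $s_t-s_{t-1}=1$ and a down-step otherwise. The height after $t$ steps is $s_t-1$, which lies in $[0,p-2]$ because the vertices of $\mathsf{P}_{p-1}$ are $1,\ldots,p-1$. The path returns to height $0$ at the end, so it is a Dyck path of semilength $b$ (length $2b$) with height at most $p-2$. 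Conversely, every such Dyck path gives a walk by $s_t=1+\text{height}_t$. These two maps are mutually inverse, so the two counts agree, which matches the OEIS description \cite[A080934]{OEIS}.

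There is no real obstacle, since the heavy lifting is done by Lemma \ref{L:m1paths} and Theorem \ref{T:PowersofMk}. The only point needing care is the convention: the height bound should be stated as $p-2$, coming from the $p-1$ vertices of the path graph, and the Dyck path has $2b$ steps.
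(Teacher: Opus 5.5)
Your proposal is correct and follows the paper's own proof: Theorem \ref{T:PowersofMk} with $k=1$ identifies the multiplicities as the $(\ell,0)$-entries of $\M_1^b$, and Lemma \ref{L:m1paths} turns these into walk counts in $\mathsf{P}_{p-1}$. For $\ell=0$ you use the same bijection between closed walks at vertex $1$ and Dyck paths of height at most $p-2$, and you state the height convention correctly.
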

\begin{proof}
    By Theorem \ref{T:PowersofMk}, $[D_1^{\otimes 2b} : D_{2\ell}]$ is the $(\ell,0)$-entry of $\M_k^b$. By Lemma \ref{L:m1paths}, it is the number of walks of length $2b$ in $\mathsf{P}_{p-1}$ starting in $2\ell+1$ and ending in $1$. When $\ell=0$, the walks of size $2b$ in $\mathsf{P}_{p-1}$ from 1 to 1 are in bijection with the Dyck paths of $2b$ steps with all values less than or equal to $p-2$.
\end{proof}

\section{Wave Sequence}\label{S:Wave}

In the previous section, we examined $\M_1^b$. In this section, we study $\M_2^b$. For this, we require that $p\geq 5$. Again, our aim is to give combinatorial descriptions for the multiplicities of indecomposable summands in the decomposition of $(\Omega^r(D_2))^{\otimes 2b}$. In this case, it turns out that they are numbers appearing in so-called the $h$-wave sequences. The 3-wave sequence has already appeared in the literature \cite[A038196]{OEIS}; otherwise, in general, they seem to be new. 

We first define the $h$-wave sequences. The definition is a bit involved and depends on the parity of $h$.

\begin{defn}\label{def: kwave} Let $h$ be a positive integer. We first define the $h$-wave sequence when $h$ is even. 

Let $h=2\ell$ be a positive even integer, and let $\mathbf{a}=(a_1,\ldots,a_\ell)$ and $\mathbf{b}=(b_1,\ldots,b_\ell)$ be sequences of numbers. We define the $h$-wave sequence $W_h(\mathbf{a},\mathbf{b})$ (with the initials $\mathbf{a}$ and $\mathbf{b}$) as follows: 
\begin{figure}[H]
\centering
    \[\scalebox{0.7}{\begin{tikzpicture}[every node/.style={fill=white,minimum width = 2ex}]
        \foreach \i in {0,-.5,-1.5,-2,-3,-3.5} {
			\draw[dashed] (0,\i)--(15.5,\i);
			}

        \foreach \i in {0,-.5,-1.5,-2,-3,-3.5} {
			\draw[dashed] (-6,\i)--(-3,\i);
			}

        \foreach \i in {0,-.5,-1.5,-2,-3,-3.5} {
			\node at (-1.5,\i) {$\ldots$};
			}
        
        \begin{scope}[xshift=-6cm]
        \node at (1*.9,-1) {\rotatebox{45}{$\vdots$}};
		\node at (2.5*.9,-2.5) {\rotatebox{45}{$\vdots$}};
		\node at (0*.9,0) {$a_1$};
		\node at (.5*.9,-.5) {$a_2$};
		\node at (1.5*.9,-1.5) {$a_{\ell}$};
		\node at (2*.9,-2) {$b_\ell$};
        \node at (3*.9,-3) {$b_2$};
		\node at (3.5*.9,-3.5) {$b_1$};
        \end{scope}

		\node at (1*.9,-1) {\rotatebox{45}{$\vdots$}};
		\node at (2.5*.9,-2.5) {\rotatebox{45}{$\vdots$}};
		\node at (5.5*.9,-2.5) {\rotatebox{-45}{$\vdots$}};
		\node at (7*.9,-1) {\rotatebox{-45}{$\vdots$}};
		\node at (11.5*.9,-1) {\rotatebox{45}{$\vdots$}};
		\node at (13*.9,-2.5) {\rotatebox{45}{$\vdots$}};
		\node at (0*.9,0) {$x_1$};
		\node at (.5*.9,-.5) {$x_2$};
		\node at (1.5*.9,-1.5) {$x_{\ell}$};
		\node at (2*.9,-2) {$y_\ell$};
			
		\node at (3*.9,-3) {$y_2$};
		\node at (3.5*.9,-3.5) {$y_1$};
		\node at (5*.9,-3) {$y_1+x_2+y_2$};
			
		\node at (6*.9,-2) {$y_{\ell-1}+x_\ell+y_\ell$};
		\node at (6.5*.9,-1.5) {$x_\ell+y_\ell$};
			
		\node at (7.5*.9,-.5) {$x_2+y_2+x_3$};
		\node at (9*.9,0) {$x_1+y_1+x_2$};
		\node at (12*.9,-.5) {$x_1+2y_1+3x_2+2y_2+x_3$};
			
		\node at (13*.9,-1.5) {$x_{\ell-1}+2y_{\ell-1}+3x_\ell+2y_\ell$};
		\node at (13.5*.9,-2) {$y_{\ell-1}+2x_\ell+2y_\ell$};
			
		\node at (14.5*.9,-3) {$y_1+2x_2+3y_2+2x_3+y_3$};
        \node at (15*.9,-3.5) {$x_1+3y_1+2x_2+y_2$};
	\end{tikzpicture}}\]
\caption{$W_h(\mathbf{a},\mathbf{b})$}
\label{Fg:WkEven}
\end{figure}
The numbers in the top line (respectively, bottom line) of the $h$-wave sequence are called the peaks (respectively, troughs). For example, the first peak and trough are $a_1$ and $b_1$ respectively. A period starts from a peak to the next peak, that is, from $x_1$ to $x_1+y_1+x_2$ in Figure \ref{Fg:WkEven}. The first period starts from $a_1$. In general, the $t$th period is the period starting from the $t$th peak. A cycle starts from a peak, to its trough, to the next peak and then ends at the next trough. It is from $x_1$ all the way to $x_1+3y_1+2x_2+y_2$ as Figure \ref{Fg:WkEven} shows. Furthermore, the sequence of every cycle is determined by the beginning of its peak to its next trough which is indicated by $x_1,\ldots,x_\ell,y_\ell,\ldots,y_1$. The cycles continue indefinitely in this manner. In fact, flipping a trough to peak, in this case, \[y_1,y_1+x_2+y_2,\ldots,x_2+y_2+x_3,x_1+y_1+x_2,\] and consider it as a peak to trough by labelling them as $x_1',x_2',\ldots,y_2',y_1'$, using the same formula, we get the next peak to trough. For example, \[y_1'+x_2'+y_2'=(x_1+y_1+x_2)+(y_1+x_2+y_2)+(x_2+y_2+x_3)=x_1+2y_1+3x_2+2y_2+x_3.\] The next case is similar but slightly more involved. 

Suppose now that $h$ is odd and let $h=2\ell+1$. Let $\mathbf{a}=(a_1,\ldots,a_\ell)$ and $\mathbf{b}=(b_1,\ldots,b_\ell)$ as before and also let $c$ be another number. The $h$-wave sequence $W_h(\mathbf{a},\mathbf{b},c)$ (with the initials $\mathbf{a}$, $\mathbf{b}$ and $c$) is defined as follows:
\begin{figure}[H]
\centering
\[\scalebox{0.7}{
    \begin{tikzpicture}[every node/.style={fill=white,minimum width = 2ex}]
        \foreach \i in {0,1,2,4,5,6,7,8,10,11,12} {
			\draw[dashed] (2.4+\i*.35,-\i*.5)--(15.5+\i*.35,-\i*.5);
            \draw[dashed] (0,-\i*.5)--(1+\i*.35,-\i*.5);
            \node at (1.7+\i*.35,-\i*.5) {$\ldots$};
			}
        
        \foreach \label/\i in {a_1/0, a_2/1, a_3/2, \rotatebox{45}{\vdots}/3, a_{\ell-1}/4, a_\ell/5, c/6, b_\ell/7, b_{\ell-1}/8, \rotatebox{45}{\vdots}/9, b_3/10, b_2/11, b_1/12} {\node at (\i*.35,-\i*.5) {$\label$};}
        
        \foreach \label/\i in {x_1/0, x_2/1, x_3/2, \rotatebox{45}{\vdots}/3, x_{\ell-1}/4, x_\ell/5, z/6, y_\ell/7, y_{\ell-1}/8, \rotatebox{45}{\vdots}/9, y_3/10, y_2/11, y_1/12} {\node at (\i*.35+3,-\i*.5) {$\label$};}

        \foreach \label/\i in {x_2+y_2+x_3/1, x_3+y_3+x_4/2, \rotatebox{-45}{\vdots}/3, x_{\ell-1}+y_{\ell-1}+x_\ell/4, x_\ell+y_\ell/5, y_1+z+x_2/6, y_{\ell-1}+x_\ell+y_\ell/7, y_{\ell-2}+x_{\ell-1}+y_{\ell-1}/8, \rotatebox{-45}{\vdots}/9, y_2+x_3+y_3/10, z+x_2+y_2/11} {\node at (12.5-\i*.35,-\i*.5) {$\label$};}

        \node at (13.5,0) {$x_1+y_1+z$}; 

        \foreach \label/\i in {3x_2+x_3+y_1+2y_2+2z/1, x_2+3x_3+x_4+2y_2+2y_3/2, \rotatebox{45}{\vdots}/3, x_{\ell-2}+3x_{\ell-1}+x_\ell+2y_{\ell-2}+2y_{\ell-1}/4, 3x_\ell + x_{\ell-1} + 2y_{\ell-1} + 2y_{\ell}/5, x_1+2x_2+2y_1+y_2+3z/6, 2x_\ell+y_{\ell-1}+2y_\ell/7, 2x_{\ell-1}+2x_\ell+y_{\ell-2}+3y_{\ell-1}+y_\ell/8, \rotatebox{45}{\vdots}/9, 2x_3+2x_4+y_2+3y_3+y_4/10, 2x_2 + 2x_3 + 3y_2 + y_3 + z/11, x_1 + x_2 + 3y_1 + 2z/12} {\node at (\i*.35+15.5,-\i*.5) {$\label$};}
    \end{tikzpicture}}\]
\caption{$W_h(\mathbf{a},\mathbf{b},c)$}
\label{Fg:WkOdd}
\end{figure} The terminologies are similar as in the previous case. Also, the pattern of the $h$-wave sequence is also self-explanatory given in Figure \ref{Fg:WkOdd}. 

Finally, for our interest, we define some specific $h$-wave sequences with specific initials. For these cases, we simplify our notation to $W_h$ as follows:
\[W_h=\left \{\begin{array}{ll}
W_h((1,1,0,\ldots,0),(1,0,0,\ldots,0))& \text{if $h$ is even,}\\
W_h((1,0,\ldots,0),(1,0,\ldots,0),1)& \text{if $h$ is odd.}
\end{array}\right .\]
\end{defn} 

We give examples for the wave sequences $W_3,W_5,W_6$ below, which will, by Lemma \ref{L:m2lemma}, correspond to the primes $7,11,13$ respectively. 

\begin{eg}
    The 3-wave sequence $W_3$ has already appeared in the literature (see \cite[A038196]{OEIS}) and is given as below
\[\scalebox{0.7}{\begin{tikzpicture}[every node/.style={fill=white,minimum width = 2ex}]
    \draw[dashed] (.7,-.5)--(13,-.5);
    \draw[dashed] (0.7,-1)--(13,-1);
    \draw[dashed] (0.7,-1.5)--(13,-1.5);
    \foreach \label/\i/\j in {1/1/1, 1/2/2, 1/3/3, 2/4/2, 3/5/1, 5/6/2, 6/7/3, 11/8/2, 14/9/1, 25/10/2, 31/11/3, 56/12/2, 70/13/1, 126/14/2, 157/15/3, 283/16/2, 353/17/1}
    {\node at (\i*.7,-\j*.5) {$\label$};}
  \end{tikzpicture}}\]
\end{eg}

\begin{eg}\label{Eg:5wave}
The $5$-wave sequence $W_5$ is
\[\scalebox{0.7}{\begin{tikzpicture}[every node/.style={fill=white,minimum width = 2ex}]
	\foreach \i in {0,-.5,-1,-1.5, -2} {
		\draw[dashed] (0,\i)--(15.5,\i);
	}*.7
	\node at (0*.9,0) {$1$};
	\node at (.5*.9,-.5) {$0$};
	\node at (1*.9,-1) {$1$};
	\node at (1.5*.9, -1.5) {$0$};
	\node at (2*.9,-2) {$1$};
	\node at (2.5*.9,-1.5) {$1$};
	\node at (3*.9,-1) {$2$};
	\node at (3.5*.9,-.5) {$0$};
	\node at (4*.9,0) {$3$};
		
	\node at (4.5*.9,-.5) {$3$};
	\node at (5*.9,-1) {$6$};
	\node at (5.5*.9, -1.5) {$1$};
		
	\node at (6*.9,-2) {$6$};

    \begin{scope}[xshift=3.5cm]
	\node at (2.5*.9,-1.5) {$10$};
	\node at (3*.9,-1) {$15$};
	\node at (3.5*.9,-.5) {$4$};
	\node at (4*.9,0) {$15$};
		
	\node at (4.5*.9,-.5) {$29$};
	\node at (5*.9,-1) {$40$};
	\node at (5.5*.9, -1.5) {$14$};
		
	\node at (6*.9,-2) {$36$};
    \end{scope}

    \begin{scope}[xshift=7cm]
	\node at (2.5*.9,-1.5) {$83$};
	\node at (3*.9,-1) {$105$};
	\node at (3.5*.9,-.5) {$43$};
	\node at (4*.9,0) {$91$};
		
	\node at (4.5*.9,-.5) {$231$};
	\node at (5*.9,-1) {$279$};
	\node at (5.5*.9, -1.5) {$126$};
		
	\node at (6*.9,-2) {$232$};
    \end{scope}
		
\end{tikzpicture}}\]
\end{eg}

\begin{eg}\label{Eg:6wave} The $6$-wave sequence $W_6$ is
\[\scalebox{0.7}{\begin{tikzpicture}[every node/.style={fill=white,minimum width = 2ex}]
	\foreach \i in {0,-.5,-1,-1.5,-2,-2.5} {
		\draw[dashed] (0,\i)--(15.5,\i);
	}

	\node at (0*.9,0) {$1$};
	\node at (.5*.9,-.5) {$1$};
	\node at (1*.9,-1) {$0$};
	\node at (1.5*.9,-1.5) {$0$};
		
	\node at (2*.9,-2) {$0$};
	\node at (2.5*.9,-2.5) {$1$};
	\node at (3*.9,-2) {$2$};
		
	\node at (3.5*.9,-1.5) {$0$};
	\node at (4*.9,-1) {$0$};
		
	\node at (4.5*.9,-0.5) {$1$};

    \begin{scope}[xshift=4.5cm]
    \node at (0*.9,0) {$3$};
	\node at (.5*.9,-.5) {$6$};
	\node at (1*.9,-1) {$1$};
	\node at (1.5*.9,-1.5) {$0$};
		
	\node at (2*.9,-2) {$3$};
	\node at (2.5*.9,-2.5) {$6$};
	\node at (3*.9,-2) {$15$};
		
	\node at (3.5*.9,-1.5) {$4$};
	\node at (4*.9,-1) {$1$};
		
	\node at (4.5*.9,-0.5) {$10$};

    \end{scope}

    \begin{scope}[xshift=9cm]
    \node at (0*.9,0) {$15$};
	\node at (.5*.9,-.5) {$40$};
	\node at (1*.9,-1) {$15$};
	\node at (1.5*.9,-1.5) {$5$};
		
	\node at (2*.9,-2) {$29$};
	\node at (2.5*.9,-2.5) {$36$};
	\node at (3*.9,-2) {$105$};
		
	\node at (3.5*.9,-1.5) {$49$};
	\node at (4*.9,-1) {$20$};
		
	\node at (4.5*.9,-0.5) {$84$};
\node at (5*.9,0) {$91$};
    \end{scope}
\end{tikzpicture}}\] 
\end{eg}

Next, we shall examine the powers of the matrix $\M_2$. We leave it to the reader to check that $\M_2$ takes the following general form: 
\begin{equation}\label{Eq:M2}
    \M_2=\begin{bmatrix}
        1&1&1&0&0&0&\cdots&0\\ 
        1&3&2&1&0&0&\cdots&0\\
        1&2&3&2&1&0&\cdots&0\\
        0&1&2&3&2&1&\cdots&0\\
        \vdots&&\ddots&\ddots& \ddots&\ddots&\ddots\\
        0&\cdots&0&1&2&3&2&1\\
        0&\cdots&0&0&1&2&3&2\\
        0&\cdots&0&0&0&1&2&2
    \end{bmatrix}.
\end{equation}

\begin{lem}\label{L:m2lemma} Let $p\geq 5$, $b\geq 1$ and $h=\frac{p-1}{2}$.
\begin{enumerate}[(i)]
	\item Suppose that $h$ is even, i.e., $p\equiv 1\pmod 4$. Let $x_1,x_2,\ldots,x_\ell,y_\ell,\ldots,y_2,y_1$ be the numbers from the peak to the trough in the $b$th period of $W_h$. Then the $(i,0)$-entries (equivalently, the $(0,i)$-entries) of $\M_2^b$ are \[x_1,y_1,x_2,y_2,\ldots,x_\ell,y_\ell\] as $i$ runs from $0$ to $\frac{p-3}{2}$.
	
    \item Suppose that $h$ is odd, i.e., $p\equiv 3\pmod 4$. Let $x_1,x_2,\ldots,x_\ell,z,y_\ell,\ldots,y_2,y_1$ be the numbers from the peak to the trough in the $b$th period of $W_h$. Then the $(i,0)$-entries (equivalently, the $(0,i)$-entries) of $\M_2^b$ are \[x_1,y_1,z,x_2,y_2,\ldots,x_\ell,y_\ell\] as $i$ runs from $0$ to $\frac{p-3}{2}$.
\end{enumerate}
\end{lem}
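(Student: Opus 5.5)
Argue by induction on $b$, using that the first column $v_{2b}=\M_2^b e_0$ satisfies $v_{2(b+1)}=\M_2 v_{2b}$ (Lemma \ref{L:vnk}). Once the column is matched with the numbers from the peak to the trough of the $b$th period, one application of the banded matrix \eqref{Eq:M2} should produce exactly the numbers of the $(b+1)$th period. By Lemma \ref{L:MkSymm} the $(i,0)$- and $(0,i)$-entries agree, so only the column needs treating.

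First I would reinterpret the wave sequence algebraically. In the even case $h=2\ell$, one period reads $x_1,\dots,x_\ell,y_\ell,\dots,y_1$ down to the trough. It then climbs back through the values $y_1$, then $y_1+x_2+y_2$, and so on up to $x_\ell+y_\ell$. Along the climb it passes $x_\ell+y_\ell,\; x_{\ell-1}+y_{\ell-1}+x_\ell,\dots,x_1+y_1+x_2$, reaching the next peak. By the ``flip'' rule, the next period is obtained from the climb through $x_1',\dots,x_\ell',y_\ell',\dots,y_1'$, with $y_1',y_1'+x_2'+y_2',\dots$ defined by the same formulas. So I would write the map from the $b$th period to the $(b+1)$th period as a composite of two ``half-steps''. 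Each half-step takes a vector interleaved as $(x_1,y_1,x_2,y_2,\dots,x_\ell,y_\ell)$ and sends it to sums of three neighbouring entries in this interleaved ordering, with modified ends. Here $x_i'$ is $x_{i-1}... $ type triple sums, e.g.\ $x_1'=y_1$, $y_1'=x_1+y_1+x_2$, $x_i'=y_{i-1}+x_i+y_i$, $y_i'=x_i+y_i+x_{i+1}$, and $y_\ell'=x_\ell+y_\ell$. Thus a half-step is $A v$, where $A$ is the tridiagonal $0/1$ matrix on the interleaved index set $[0,\frac{p-3}{2}]$. Its ends are modified: the $(0,0)$-entry is $0$, and the last diagonal entry is $1$ (with extra diagonal structure to be read off). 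The full period step is then $A^2$. The key computation is to check that $A^2=\M_2$ as displayed in \eqref{Eq:M2}. In the interior, a tridiagonal all-ones matrix squared gives the band $1,2,3,2,1$, and the boundary rows should produce $1,1,1$ and $1,3,2,1$ at the top and $1,2,2$ at the bottom.

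The odd case $h=2\ell+1$ is the same, with the interleaving $x_1,y_1,z,x_2,y_2,\dots$. The formulas in Figure \ref{Fg:WkOdd} (e.g.\ $x_1+y_1+z$, $y_1+z+x_2$, $z+x_2+y_2$) again say that each half-step sums three consecutive entries in this ordering. I would verify the boundary rows analogously. This is essentially the statement that $\M_2=A^2$, where $A$ is the adjacency-plus-loop matrix of a path with suitable end behaviour. That is plausible because $D_2\equiv$ a summand structure of $D_1^{\otimes 2}$-type, in the spirit of Lemma \ref{L:m1paths}.

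Base case: the first period of $W_h$ gives $v_2=(1,1,1,0,\dots,0)^\top$, the first column of $\M_2$. Indeed, $W_h$ for even $h$ starts with peak to trough $1,1,0,\dots,0$ then $0,\dots,0,1$, interleaved as $x_1=1,y_1=1,x_2=1,\dots$; for odd $h$ we get $x_1=1,y_1=1,z=1$. Then induction gives $v_{2(b+1)}=A^2v_{2b}$, which is the next period. The main obstacle is bookkeeping: pinning down the exact boundary behaviour of $A$ at both ends in each parity case (especially near $x_\ell,y_\ell$, and $z$ when $h$ is odd), and checking that the flip rule really realises $A$. I would do this by directly comparing the top two and bottom two rows of $A^2$ with \eqref{Eq:M2}, and checking small cases $p=7,11,13$ against Examples \ref{Eg:p7D12}, \ref{Eg:5wave}, \ref{Eg:6wave}.
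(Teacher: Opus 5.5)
Your proof is correct. It is the same induction on $b$ as the paper's, using $v_{2b+2}^{(2)}=\M_2v_{2b}^{(2)}$ and the symmetry of $\M_2$, but it is organised differently.

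\textbf{How the two routes differ.} The paper multiplies the interleaved column by $\M_2$ in one step. It then matches the resulting entries $x_1+y_1+x_2$, $x_1+3y_1+2x_2+y_2,\dots$ against the next-cycle formulas drawn in Figures \ref{Fg:WkEven} and \ref{Fg:WkOdd}. You instead read the flip rule literally as a half-step $A$ and reduce everything to the identity $A^2=\M_2$.

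\textbf{What your factorisation buys.} In both parities your $A$ is the tridiagonal all-ones matrix of size $\frac{p-1}{2}$ with its $(0,0)$-entry replaced by $0$. Nothing extra happens at the bottom end: the last row is $(0,\dots,0,1,1)$, exactly as your formula $y_\ell'=x_\ell+y_\ell$ says. Only the interleaving of the $x$'s, $y$'s and $z$ depends on the parity of $h$. So a single computation covers both cases, and also the small primes that the paper checks by hand.

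\textbf{A cleaner reason for $A^2=\M_2$.} There is a better justification than your heuristic about ``$D_1^{\otimes 2}$-type'' structure. We have $\Rect(2,0)=\{2\}$, $\Rect(2,2j)=\{2j-2,2j,2j+2\}$ for $2\leq 2j\leq p-5$, and $\Rect(2,p-3)=\{p-5,p-3\}$. Hence $A$ is precisely the matrix of $[D_2]\otimes-$ on the classes $[D_0],[D_2],\dots,[D_{p-3}]$. By definition, $\M_2$ is the matrix of $[D_2^{\otimes 2}]\otimes-$ on the same classes. So $A^2=\M_2$ is just associativity of the tensor product.

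As a by-product, your factorisation shows that the ascending halves of $W_h$ are the multiplicity vectors of the odd powers $D_2^{\otimes(2b+1)}$, read through the flipped labels $x_i',y_i'$ (and $z'$). The paper's one-step comparison does not reveal this. The paper's route, for its part, avoids introducing $A$. It relies on the two-step formulas in the figures, which are themselves obtained by applying the flip rule twice.
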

\begin{proof}
    We assume that $p \geq 13$, as the cases of smaller $p$ are easy to check. We argue by induction on $b$. Suppose that $h$ is even. The $b=1$ case is clear. Suppose that the first column of $\M_2^{b-1}$ is given as $x_1,y_1,x_2,y_2,\ldots,x_\ell,y_\ell$ in the $(b-1)$th period of $W_h$. Then the first column of $\M_2^b$ is \[\M_2\begin{bmatrix} x_1\\ y_1\\ x_2\\ y_2\\ \vdots\\ x_{\ell-1}\\ y_{\ell-1}\\ x_\ell\\ y_\ell
    \end{bmatrix}=\begin{bmatrix} x_1+y_1+x_2\\ x_1+3y_1+2x_2+y_2\\ x_1+2y_1+3x_2+2y_2+x_3\\ y_1+2x_2+3y_2+2x_3+y_3\\ \vdots\\ x_{\ell-2}+2y_{\ell-2}+3x_{\ell-1}+2y_{\ell-1}+x_\ell\\ y_{\ell-2}+2x_{\ell-1}+3y_{\ell-1}+2x_\ell+y_\ell\\ x_{\ell-1}+2y_{\ell-1}+3x_\ell+2y_\ell\\
		y_{\ell-1}+2x_\ell+2y_\ell
	\end{bmatrix}\] The entries in the column matrix on the 
right side are precisely the numbers in the $b$th period of $W_h$ from its peak to its trough. The other case when $h$ is odd is similar and is left to the reader.  
\end{proof}

We illustrate the lemma above using the following two examples. Their wave sequences are given in Examples \ref{Eg:5wave} and \ref{Eg:6wave}. 

\begin{eg}\label{Eg:p11M2}
    Let $p=11$. We have 
\begin{align*}\M_2&= \begin{bmatrix} 1 & 1 & 1 & 0 & 0 \\
1 & 3 & 2 & 1 & 0 \\
1 & 2 & 3 & 2 & 1 \\
0 & 1 & 2 & 3 & 2 \\
0 & 0 & 1 & 2 & 2
\end{bmatrix},& 
\M_2^2&=\begin{bmatrix}
3&6&6&3&1\\
6&15&15&10&4\\
6&15&19&16&9\\
3&10&16&18&12\\
1&4&9&12&9
\end{bmatrix},& \M_2^3&=\begin{bmatrix}
15&36&40&29&14\\
36&91&105&83&43\\
40&105&134&119&69\\
29&83&119&120&76\\
14&43&69&76&51
\end{bmatrix}.
\end{align*} On the other hand, $W_5$ is given as in Example \ref{Eg:5wave}.
\end{eg}

\begin{eg}\label{Eg:p13M2}
    Let $p=13$. We have 
    \begin{align*}
        \M_2&=\begin{bmatrix}
        1&1&1&0&0&0\\ 1&3&2&1&0&0\\
        1&2&3&2&1&0\\
        0&1&2&3&2&1\\
        0&0&1&2&3&2\\
        0&0&0&1&2&2
    \end{bmatrix}, \quad
    \M_2^2=
\begin{bmatrix}
3&6&6&3&1&0\\
6&15&15&10&4&1\\
6&15&19&16&10&4\\
3&10&16&19&16&10\\
1&4&10&16&18&14\\
0&1&4&10&14&14
\end{bmatrix},\\
\M_2^3&=\begin{bmatrix}
15&36&40&29&15&5\\
36&91&105&84&49&21\\
40&105&135&125&89&48\\
29&84&125&141&122&81\\
15&49&89&122&124&94\\
5&21&48&81&94&80
\end{bmatrix}
    \end{align*} On the other hand, $W_6$ is given as in Example \ref{Eg:6wave}.
\end{eg}

We can now state the first result of this section, that is, describing the decomposition of $(\Omega^r(D_2))^{\otimes 2b}$ in terms of the numbers in the wave sequences. 

\begin{thm}
    Let $p\geq 5$, $b\geq 1$, $r\in [0,2p-3]$ and $h=\frac{p-1}{2}$. 
    \begin{enumerate}[(i)] 
        \item Let $h$ be even, i.e., $p\equiv 1\pmod 4$, and let $x_1,x_2,\ldots,x_\ell,y_\ell,\ldots,y_2,y_1$ be the numbers from the peak to the trough in the $b$th period of $W_h$ (so that $h=2\ell$). Then \[(\Omega^r(D_2))^{\otimes 2b}\equiv x_1\Omega^{2br}(D_0)\oplus y_1\Omega^{2br}(D_2)\oplus \cdots\oplus x_{\ell}\Omega^{2br}(D_{4\ell-4})\oplus y_{\ell}\Omega^{2br}(D_{4\ell-2}).\]
        
        \item  Let $h$ be odd, i.e., $p\equiv 3\pmod 4$, and let $x_1,x_2,\ldots,x_\ell,z,y_\ell,\ldots,y_2,y_1$ be the numbers from the peak to the trough in the $b$th period of $W_h$ (so that $h=2\ell+1$). Then 
            \begin{align*}
            (\Omega^r(D_2))^{\otimes 2b}\equiv & \ x_1\Omega^{2br}(D_0)\oplus y_1\Omega^{2br}(D_2)\oplus z\Omega^{2br}(D_4)\oplus x_2\Omega^{2br}(D_6)\oplus y_2\Omega^{2br}(D_8)\\ &\oplus \cdots\oplus 
            \ x_{\ell} \Omega^{2br}(D_{4\ell-2})\oplus y_\ell \Omega^{2br}(D_{4\ell}).
            \end{align*}
    \end{enumerate}
\end{thm}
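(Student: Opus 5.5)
This theorem follows by combining Theorem \ref{T:PowersofMk} with Lemma \ref{L:m2lemma}; no new computation should be needed beyond matching indices. First I apply Theorem \ref{T:PowersofMk} with $k=2$. Since $p\geq 5$, we have $m=\min\{2,p-4\}=2$, so $\M_2$ is the matrix of Equation \ref{Eq:M2}. The theorem gives
\[(\Omega^r(D_2))^{\otimes 2b}\equiv\bigoplus_{0\leq 2i\leq p-3}(\M_2^b)_{i,0}\,\Omega^{2br}(D_{2i}).\]
Thus the multiplicity of $\Omega^{2br}(D_{2i})$ is exactly the $(i,0)$-entry of $\M_2^b$, for $i\in[0,\frac{p-3}{2}]$. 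Note that there are $h=\frac{p-1}{2}$ such indices, matching the number of terms from a peak to a trough in $W_h$.

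Next I read off these entries from Lemma \ref{L:m2lemma}.

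\textbf{Case $h=2\ell$ even.} Part (i) of the lemma says that the first column of $\M_2^b$, read for $i=0,1,\ldots,2\ell-1$, is $x_1,y_1,x_2,y_2,\ldots,x_\ell,y_\ell$. So $x_t$ sits at $i=2t-2$ and $y_t$ at $i=2t-1$. Hence $x_t$ is the multiplicity of $\Omega^{2br}(D_{4t-4})$ and $y_t$ that of $\Omega^{2br}(D_{4t-2})$. For $t=\ell$ these are $D_{4\ell-4}$ and $D_{4\ell-2}=D_{p-3}$, as claimed.

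\textbf{Case $h=2\ell+1$ odd.} Part (ii) gives the column $x_1,y_1,z,x_2,y_2,\ldots,x_\ell,y_\ell$. So $x_1,y_1,z$ occupy $i=0,1,2$, and for $t\geq 2$, $x_t$ occupies $i=2t-1$ and $y_t$ occupies $i=2t$. Translating via $D_{2i}$, these correspond to $D_0,D_2,D_4$ and then $D_{4t-2},D_{4t}$. The last index is $4\ell=2h-2=p-3$, consistent with the statement.

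The only real content lies in Lemma \ref{L:m2lemma}, which I am allowed to assume. Within this proof, the main point requiring care is the bookkeeping of indices, particularly the placement of $z$ in the odd case and checking that the final index equals $p-3$. A minor point is that the lemma refers to the $b$th period of $W_h$, and its induction starts with the first period for $b=1$; this convention matches the statement, so no off-by-one shift arises. Finally, for small primes ($p=5,7,11$) where the lemma was checked directly, the same reading applies, so no separate argument is needed.
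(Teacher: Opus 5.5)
Your proposal is correct and follows the paper's own argument, which simply combines Theorem \ref{T:PowersofMk} (with $k=2$) and Lemma \ref{L:m2lemma}. Your index bookkeeping is also right: $x_t\leftrightarrow D_{4t-4}$ and $y_t\leftrightarrow D_{4t-2}$ in the even case, and $x_t\leftrightarrow D_{4t-2}$ and $y_t\leftrightarrow D_{4t}$ for $t\geq 2$ in the odd case, ending at $D_{p-3}$. There is one harmless slip: for $p=5$ one has $m=\min\{2,p-4\}=1$, not $2$, so $\M_2=\begin{bmatrix}1&1\\1&2\end{bmatrix}$ is not literally the matrix in Equation (\ref{Eq:M2}); your argument never uses this, since Theorem \ref{T:PowersofMk} and Lemma \ref{L:m2lemma} are stated for all $p\geq 5$.
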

\begin{proof}
    This follows from Lemma \ref{L:m2lemma} and Theorem \ref{T:PowersofMk}.
\end{proof}

We have successfully identified the multiplicity of the summand $\Omega^{2br}(D_{2i})$ in $(\Omega^r(D_2))^{\otimes 2b}$ as the numbers in certain wave sequences which are also the same as the $0$th column of the matrix $\M_2^n$. However, the remaining entries in $\M_2^n$ also have implicit meaning in the tensor powers of $D_2$. In the sequel, we present a combinatorial description for these entries. 

\begin{defn}\label{def: propagation} Let $x:=(x_0,x_1,\ldots,x_d)$ be a sequence of numbers. For each integer $s\in [0,d]$ and $t\in [0,d]$, the $t$th propagation $z$ in $x$ from the epicenter $x_s$ is the following subsequence of $x$:
	\begin{enumerate}[(i)]
		\item Suppose that $s\leq d-s$.
		\begin{enumerate}[(a)]
			\item if $t\leq s$, then $z=(x_{s-t},x_{s-t+1},\ldots,x_{s+t})$;
			\item if $s+1\leq t\leq d-s$, then $z=(x_{t-s},x_{t-s+1},\ldots,x_{s+t})$;
			\item if $d-s+1\leq t$, then $z=(x_{t-s},x_{t-s+1},\ldots,x_{2d-t-s+1})$.
		\end{enumerate}
		\item Suppose that $d-s< s$.
		\begin{enumerate}[(a)]
			\item if $t\leq d-s$, then $z=(x_{s-t},x_{s-t+1},\ldots,x_{s+t})$;
			\item if $d-s+1\leq t\leq s$, then $z=(x_{s-t},x_{s-t+1},\ldots,x_{2d-t-s+1})$;
			\item if $s+1\leq t$, then $z=(x_{t-s},x_{t-s+1},\ldots,x_{2d-t-s+1})$.
		\end{enumerate}
	\end{enumerate} The total value of a $t$th propagation is the sum of the values in that subsequence. 
\end{defn}

We illustrate the definition as follows. The $t$th propagation describes a wave spreading outward from $x_s$. At first, the wave expands symmetrically to the left and right, so the propagation contains all terms between $x_{s-t}$ and $x_{s+t}$. When the wave reaches one of the boundaries, $x_0$ or $x_d$, it does not simply stop there. Instead, the wave is reflected back into the sequence. Thus the propagation continues by “bouncing” off the boundary. As the reflected part overlaps with the still-expanding part, some entries are effectively canceled out, producing a kind of destructive interference. Because of this interference, the visible propagation interval can begin to shrink after enough reflections occur. The reflection at the left boundary is immediate while the reflection at the right boundary is delayed.

\begin{eg}\label{Eg:p11Prop} Let $x=(15,36,40,29,14)$ the first row of our matrix $\M_2^3$ in Example \ref{Eg:p11M2}. For $t\in [0,4]$, the $t$th propagations $z$ in $x$ from the epicenter $x_3=29$ and their total values $T$ are given below:
\[\begin{tabular}{ccc}
  \hline 
  $t$&$z$&$T$\\
  \hline 
  0&(29)&29\\
  1&(40,29,14)&83\\
  2&(36,40,29,14)&119\\
  3&(15,36,40,29)&120\\
  4&(36,40) &76\\
  \hline
\end{tabular}\]
\end{eg}

The total values of propagations have the following symmetry property. 

\begin{lem}\label{L:propsymm}
  Let $x=(x_0,\ldots,x_d)$ and $s,t\in[0,d]$. Then the total values of the $t$th propagation in $x$ from the epicenter $x_s$ and the $s$th propagation in $x$ from the epicenter $x_t$ are the same.
\end{lem}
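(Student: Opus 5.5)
Write $T(s,t)$ for the total value of the $t$th propagation in $x$ from the epicenter $x_s$. The plan is to express $T(s,t)$ as $\sum_{u} c_{s,t,u}\,x_u$ with coefficients $c_{s,t,u}\in\{0,1\}$, namely the indicator that $u$ lies in the index interval of Definition~\ref{def: propagation}. It then suffices to show $c_{s,t,u}=c_{t,s,u}$ for every $u\in[0,d]$. This is a purely combinatorial statement, independent of the values $x_u$. Each propagation is a contiguous interval $[L(s,t),R(s,t)]$, so it is enough to prove
\[
L(s,t)=L(t,s),\qquad R(s,t)=R(t,s).
\]

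The first step is to give closed forms for the endpoints. Reading off the six cases of Definition~\ref{def: propagation}, the left endpoint is $s-t$ when $t\le s$ and $t-s$ when $t>s$. So in every case $L(s,t)=|s-t|$, which is visibly symmetric. The right endpoint is $s+t$ when $t\le \min(s,d-s)$ or $s+1\le t\le d-s$, and $2d-t-s+1$ otherwise. I claim this amounts to
\[
R(s,t)=\min(s+t,\;2d+1-s-t).
\]
To check the claim, note that $s+t\le 2d+1-s-t$ if and only if $s+t\le d$ (since $s+t$ is an integer). In case (i)(a),(b) we have $t\le d-s$, hence $s+t\le d$. In (i)(c) and (ii)(b),(c) we have $t\ge d-s+1$, hence $s+t\ge d+1$. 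In (ii)(a) we have $t\le d-s$. So the stated cases are exactly the split $s+t\le d$ versus $s+t\ge d+1$, and $R(s,t)$ depends only on $s+t$. In particular $R$ is symmetric in $s$ and $t$.

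With both endpoints symmetric, the intervals $[L(s,t),R(s,t)]$ and $[L(t,s),R(t,s)]$ coincide, so the two subsequences are literally the same and their total values agree. I also want the interpretation in the degenerate case where the interval is empty, i.e.\ $L>R$. Taking that to be the empty propagation with total $0$, the argument still goes through because the defining formulas are symmetric.

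The main obstacle is the bookkeeping in the case analysis. One must check that the boundary cases (for example $t=s$, or $t=d-s$ versus $d-s+1$) are assigned consistently between parts (i) and (ii), so that the unified formulas for $L$ and $R$ really hold on all of $[0,d]^2$. I would verify this by plugging the borderline values into both branches. Example~\ref{Eg:p11Prop} serves as a sanity check: with $d=4$, $s=3$, $t=4$ the formulas give $L=1$ and $R=\min(7,2)=2$, i.e.\ $(36,40)$, as listed in the table.
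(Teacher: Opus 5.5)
Your proof is correct and rests on the same idea as the paper's: the $t$th propagation from $x_s$ and the $s$th propagation from $x_t$ are literally the same index interval, so their totals agree. The paper establishes this by matching cases of the definition, working out only hypothesis (i) and leaving (ii) to the reader. Your closed forms $L(s,t)=|s-t|$ and $R(s,t)=\min(s+t,\,2d+1-s-t)$ handle all six cases at once. Your empty-interval caveat is never needed, since $|s-t|\le \min(s+t,\,2d+1-s-t)$ holds for all $s,t\in[0,d]$.
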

\begin{proof}
  Let $z$ (respectively, $z'$) be the $t$th (respectively, $s$th) propagation in $x$ from the epicenter $x_s$ (respectively, $x_t$). We check the case when the pair $(s,t)$ satisfies hypothesis (i) in Definition \ref{def: propagation} and leave the other to the reader. 
  \begin{enumerate}[(a)]
    \item Suppose that $0\leq t\leq s\leq d-s$. We have $z=(x_{s-t},x_{s-t+1},\ldots,x_{s+t})$. If $s=t$, clearly, $z=z'$. Suppose that $t<s$. Since $t\leq d-t$ and $t+1\leq s$, $z'=(x_{s-t},x_{s-t+1},\ldots,x_{s+t})$. So $z=z'$.
    \item Suppose that $s<t\leq d-s$. We have $z=(x_{t-s},x_{t-s+1},\ldots,x_{s+t})$. If $t\leq d-t$, since $s\leq t$, then $z'=(x_{t-s},x_{t-s+1},\ldots,x_{t+s})=z$. If $d-t<t$, since $s\leq d-t$, we also get $z'=z$.
    \item Suppose that $s\leq d-s<t$. We have $z= (x_{t-s},x_{t-s+1},\ldots,x_{2d-t-s+1})$. On the other hand, we have both $d-t<t$ and $d-t+1\leq s\leq t$. So $z'=(x_{t-s},x_{t-s+1},\ldots,x_{2d-s-t+1})=z$. 
  \end{enumerate} 
\end{proof}

We can now describe each of the entries of $\M_2^b$ in terms of the top row of the matrix. The complete proof requires us to consider many cases and is rather long. For brevity, we only work out a few cases and refer the reader to the first author's PhD thesis \cite{KuaThesis} for the remaining ones. 

\begin{prop}\label{P:propsymm}
    Let $x=(x_0,x_1,\ldots,x_{\frac{p-3}{2}})$ be the top row of $\M_2^b$. Then, for each $i,j\in [0,\frac{p-3}{2}]$, the $(i,j)$-entry of $\M_2^b$ is the total value of the $i$th propagation in $x$ from the epicenter $x_j$.
\end{prop}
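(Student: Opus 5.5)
The plan is to express $\M_2$ as a polynomial in a simpler tridiagonal matrix, and then read off each entry of $\M_2^b=\M_2^{b-1}\M_2$ from the top row by induction on the row index $i$. Write $n=\frac{p-3}{2}$ (so $d=n$ in Definition \ref{def: propagation}). For a column index $j$, the propagation recipe expresses $(\M_2^b)_{i,j}$ as a signed combination of entries $x_k=(\M_2^b)_{0,k}$. It therefore suffices to produce matrices $E_i$, polynomial in $\M_2$, with $E_0=I$, such that the $j$th column of $E_i$ is the indicator vector of the index set of the $i$th propagation from epicenter $j$. Indeed, since $E_i$ commutes with $\M_2^b$ and both are symmetric (Lemma \ref{L:MkSymm}), we have
\[(\M_2^b)_{i,j}=(E_i\M_2^b)_{0,j}=(\M_2^bE_i)_{0,j}=\sum_k x_k(E_i)_{k,j},\]
as long as $E_i$ has $0$th row $e_i^\top$, i.e.\ $(E_i)_{0,k}=\delta_{ik}$.

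The construction of the $E_i$ should come from the structure of $\M_2$ in Equation \ref{Eq:M2}. I expect $\M_2=T^2-T+I$ or a close variant, where $T$ is a tridiagonal "reflecting walk" matrix. This gives a Chebyshev-type three-term recurrence
\[E_{i+1}=A E_i - E_{i-1},\]
with $A$ a fixed matrix related to $\M_2$ (or to $\M_1$, given Lemma \ref{L:m1paths}). Concretely, I would define $E_i$ directly as the 0/1 matrix whose $j$th column is the propagation indicator. Then I would verify three things:
\begin{enumerate}[(a)]
\item the $0$th row of $E_i$ is $e_i^\top$, which follows from Definition \ref{def: propagation}, since the propagation from epicenter $0$ at step $i$ is the singleton window $\{x_i\}$;
\item $E_i$ is symmetric, which is Lemma \ref{L:propsymm};
\item $E_i$ commutes with $\M_2$.
\end{enumerate}

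Item (c) is the heart of the argument. I would prove the stronger fact that $\M_2E_i$ is a combination of $E_{i-2},\dots,E_{i+2}$ that matches $E_i\M_2$. Equivalently, I would show that every $E_i$ lies in the algebra generated by $\M_2$, by establishing a recurrence $E_{i+2}=\M_2E_i-(\text{lower }E\text{'s})$ column by column. The column check splits into the cases of Definition \ref{def: propagation}. In the interior, windows simply widen by one on each side. Near the boundaries, the immediate left reflection and the delayed right reflection shift and shrink the windows, and these cases must match the modified corner entries $(1,1,1)$ and $(2,2)$ of $\M_2$.

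I expect this boundary bookkeeping to be the main obstacle: one must confirm that the truncated rows of $\M_2$ (first row $1,1,1$; last row $1,2,2$) produce exactly the interval endpoints $t-s$ and $2d-t-s+1$. I would first test the claimed recurrence on the $p=11$ data of Examples \ref{Eg:p11M2} and \ref{Eg:p11Prop}. After that I would handle the generic interior case in full and treat the finitely many edge configurations ($t$ near $s$, $d-s$, or $d$) one at a time, in the same way as in the proof of Lemma \ref{L:propsymm}.
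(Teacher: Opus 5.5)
Your reduction is correct, and it takes a genuinely different route from the paper. It is also a better one.

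\textbf{Comparison with the paper.} The paper inducts on $b$. It writes the top row of $\M_2^b$ as $x'\M_2$, substitutes the inductive propagation sums for the entries of $\M_2^{b-1}$ in $\sum_c(\M_2)_{i,c}(\M_2^{b-1})_{c,j}$, and regroups case by case in $(i,j)$, deferring many cases to the thesis. Your argument removes $b$ entirely. Once $E_i$ has $0$th row $e_i^\top$ and commutes with $\M_2$, the chain $(\M_2^b)_{i,j}=(E_i\M_2^b)_{0,j}=(\M_2^bE_i)_{0,j}=\sum_k x_k(E_i)_{k,j}$ settles every $b$ at once.

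\textbf{Step (c) is free.} Write $P(i,j)$ for the index set of the $i$th propagation from $x_j$. Unwinding Definition \ref{def: propagation} gives $P(i,j)=[\,|i-j|,\min(i+j,\,p-2-i-j)\,]$, which is exactly $\frac12\Rect(2i,2j)$. Hence $(E_i)_{k,j}=[D_{2i}\otimes D_{2j}:D_{2k}]$. So $E_i$ is the matrix of multiplication by $[D_{2i}]$ on the span of the $[D_{2j}]$ in the stable Green ring. Definition \ref{D:Mvnk} then says $\M_2=E_0+E_1+E_2$ (or $E_0+E_1$ when $p=5$). Now (c) is just commutativity and associativity of the stable Green ring, and no boundary bookkeeping is needed. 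Since $\M_k=\sum_{i=0}^mE_i$, the same argument proves the propagation formula for $\M_k^b$ for every $k$, not only $k=2$.

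\textbf{Two justifications are wrong as written, though the claims are true.}

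First, Lemma \ref{L:propsymm} does not give (b). That lemma says $P(t,s)=P(s,t)$, i.e.\ column $s$ of $E_t$ equals column $t$ of $E_s$. But $E_i=E_i^\top$ needs $k\in P(i,j)\Leftrightarrow j\in P(i,k)$. This does follow from the interval form: $k\in P(i,j)$ iff $|i-j|\le k\le i+j$ and $i+j+k\le p-2$, and these conditions are symmetric in $i,j,k$. It is also exactly what the proof of Lemma \ref{L:MkSymm} shows. Relatedly, your epicenter-$0$ observation gives the $0$th \emph{column} of $E_i$, so (a) needs (b), or the direct check $0\in P(i,k)\Leftrightarrow k=i$.

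Second, a recurrence $E_{i+2}=\M_2E_i-(\cdots)$ started from $E_0=I$ and $E_1$ only places $E_i$ in the algebra generated by $\M_2$ and $E_1$, not in $\Z[\M_2]$. So your ``equivalently'' does not by itself yield (c). If you want the elementary route, either of the following works.
\begin{itemize}
\item Verify $\M_2E_i=\sum_c(\M_2)_{c,i}E_c$ for each $i$; the boundary cases mirror the first and last columns of Equation \ref{Eq:M2}. This makes $\M_2E_i$ symmetric, hence $E_i\M_2=(\M_2E_i)^\top=\M_2E_i$.
\item Check $E_1E_i=E_{i-1}+E_i+E_{i+1}$ (truncated at the ends) and use $\M_2=E_1^2$. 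Your guess $T^2-T+I$ is off; the correct relation is $\M_2=(\M_1-I)^2$.
\end{itemize}
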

\begin{proof}
    Since $\M_2$ is symmetric, $x$ is also the first column of $\M_2^b$. Let $h=\frac{p-3}{2}$. We proceed via induction on $b$. For $i,j\in [0,h]$, let $z$ denote the $i$th propagation in $x$ from the epicenter $x_j$. 
    
    For $b=1$, we have $x=(1,1,1,0,\ldots,0)$. If $i=0=j$, then $z=(1)$. If $h\neq i=j\neq 0$, then $z=(1,1,1,0,\ldots,0)$. If $i=j=h$, then $z=(1,1)$. These justify the diagonal entries of $\M_2$ in Equation (\ref{Eq:M2}). The remaining cases are left to the reader. 

    Suppose now that $b\geq 2$. Let $x'=(x_0',x_1',\ldots,x_h')$ be the first row of $\M_2^{b-1}$. Therefore, 
    \begin{align}\label{Eq:xx'}
        x_0&=x_0'+x_1'+x_2',\notag\\
        x_1&=x_0'+3x_1'+2x_2'+x_3',\notag\\
        x_c&=x_{c-2}'+2x_{c-1}'+ 3x_c'+2x_{c+1}'+x_{c+2}',\notag\\
        x_{h-1}&=x_{h-3}'+2x_{h-2}'+3x_{h-1}'+2x_h',\notag\\
        x_h&=x_{h-2}'+2x_{h-1}'+2x_h',
    \end{align} where $c\in [2,h-2]$. On the other hand, we calculate \[\mu_{i,j}=(\M_2^b)_{i,j}=(\M_2\M_2^{b-1})_{i,j}= \sum_{c=0}^{h}(\M_2)_{i,c}(\M_2^{b-1})_{c,j}.\] To prove the result, there are many cases to check. But, since $\M_2$ is symmetric, we have $\mu_{i,j}=\mu_{j,i}$ and, by Lemma \ref{L:propsymm}, we can assume that $i\leq j$. However, we first deal with the case when $j=0,1$ and $i$ is arbitrary because it is special. 
    
    If $j=0$, for each $i\in [0,h]$, we have $z=(x_i)$. On the other hand, $(\M_2^b)_{i,0}=x_i$. Suppose that $j=1$. There are 3 subcases.
    \begin{enumerate}[(i)]
      \item Suppose that $i\in \{0,1,2,3\}$. If $i=0$, this is similar to the $j=0$ case because $\M_2$ is symmetric. Now let $i=1$. We have $z=(x_0,x_1,x_2)$. On the other hand, 
          \begin{align*} 
          \mu_{1,1}&= (\M_2^{b-1})_{0,1}+3(\M_2^{b-1})_{1,1}+ 2(\M_2^{b-1})_{2,1}+(\M_2^{b-1})_{3,1}\\
          &=x_1'+3(x_0'+x_1'+x_2')+2(x_1'+x_2'+x_3')+ (x_2'+x_3'+x_4')\\
          &=x_0+x_1+x_2.
          \end{align*} The cases $i=2,3$ are left to the reader. 
      \item Suppose that $4\leq i\leq h-1$. Assume first that $i\leq h-3$. We have $z=(x_{i-1},x_i,x_{i+1})$. On the other hand, 
          \begin{align*} 
          \mu_{i,1}&=
          (\M_2^{b-1})_{i-2,1}+2(\M_2^{b-1})_{i-1,1}+ 3(\M_2^{b-1})_{i,1}+2(\M_2^{b-1})_{i+1,1}+ (\M_2^{b-1})_{i+2,1}\\
          &=(x_{i-3}'+x_{i-2}'+x_{i-1}')+ 2(x_{i-2}'+x_{i-1}'+x_i')+ 3(x_{i-1}'+x_i'+x_{i+1}')+ \\ &\ \ \ \ 2(x_i'+x_{i+1}'+x_{i+2}')+ (x_{i+1}'+x_{i+2}'+x_{i+3}')\\
          &=x_{i-1}+x_i+x_{i+1}
          \end{align*} The cases for $i=h-2,h-1$ are similar with some slight modification. 
      \item Suppose that $i=h$. We have $z=(x_{h-1},x_h)$. On the other hand, 
          \begin{align*}
            \mu_{h,1}&=(\M_2^{b-1})_{h-2,1}+ 2(\M_2^{b-1})_{h-1,1}+ 2(\M_2^{b-1})_{h,1}\\
            &=(x_{h-3}'+x_{h-2}'+x_{h-1}')+ 2(x_{h-2}'+x_{h-1}'+x_h')+2(x_{h-1}'+x_h')\\
            &=x_{h-1}+x_h.
          \end{align*}
    \end{enumerate} The case when $j=0,1$ and $i$ is arbitrary is done.
    
    We now assume that $i\leq j$ and $2\leq j\leq h-2$. If $i=0,1$, since $\mu_{i,j}=\mu_{j,i}$, these cases have been dealt with. So, we can focus on the case when $2\leq i\leq j\leq h-2$. 
    \begin{enumerate}[(i)]
	  \item Suppose that $2\leq i\leq j-2$. We have 
        $z=(x_{j-i},x_{j-i+1},\ldots,x_{j+i})$. On the other hand, we have 
        \begin{align*} 
        \mu_{i,j}&=(\M_2^{b-1})_{i-2,j}+2(\M_2^{b-1})_{i-1,j}+ 3(\M_2^{b-1})_{i,j}+ 2(\M_2^{b-1})_{i+1,j}+ (\M_2^{b-1})_{i+2,j}\\
        &=(x_{j-i+2}'+x_{j-i+3}'+\cdots+x_{j+i-2}')+2(x_{j-i+1}'+ x_{j-i+2}'+\cdots+x_{j+i-1}')\\
        &\ \ \ \ +3(x_{j-i}'+x_{j-i+1}'+\cdots+x_{j+i}')+2(x_{j-i-1}'+ x_{j-i}'+\cdots+x_{j+i+1}')\\
        &\ \ \ \ +(x_{j-i-2}'+x_{j-i-1}'+\cdots+x_{j+i+2}')\\
        &=x_{j-i}+x_{j-(i-1)}+\cdots+x_{j+(i-1)}+x_{j+i},
        \end{align*} 
        where the penultimate equation follows using induction hypothesis and the last equation can be proved by using induction on $i$ (the base case is $i=2$) and the system of linear equations in (\ref{Eq:xx'}). 
      \item Suppose that $2\leq i=j-1$. Suppose first that $j\leq h-j$. We have $z=(x_1,x_2,\ldots,x_{2j-1})$. If $j<h-j$, then we have 
          \begin{align*}
            \mu_{j-1,j}&=(\M_2^{b-1})_{j-3,j}+ 2(\M_2^{b-1})_{j-2,j}+ 3(\M_2^{b-1})_{j-1,j}+ 2(\M_2^{b-1})_{j,j}+(\M_2^{b-1})_{j+1,j}\\
            &=(x_3'+x_4'+\cdots+x_{2j-3}')+ 2(x_2'+x_3'+\cdots+x_{2j-2}')+ 3(x_1'+x_2'+\cdots+x_{2j-1}')\\&\ \ \ \  
            + 2(x_0'+x_1'+\cdots+x_{2j}') + (x_1'+x_2'+\cdots+x_{2j+1}')\\
            &=x_1+(x_4'+\cdots+x_{2j-3}')+ 2(x_3'+\cdots+x_{2j-2}')+ 3(x_2'+\cdots+x_{2j-1}')\\&\ \ \ \ 
            + 2(x_1'+\cdots+x_{2j}') + (x_0'+x_1'+x_2'+\cdots+x_{2j+1}')\\
            &=x_1+x_2+\cdots+x_{2j-1},
          \end{align*} where the last equality follows the similar argument as in part (i). If $j=h-j$, then similar calculation shows
          \begin{align*}
            \mu_{j-1,j}&=(x_3'+x_4'+\cdots+x_{h-3}')+ 2(x_2'+x_3'+\cdots+x_{h-2}') + 3(x_1'+x_2'+\cdots+x_{h-1}')\\&\ \ \ \ 
            + 2(x_0'+x_1'+\cdots+x_{h}') + (x_1'+x_2'+\cdots+x_h')\\
            &=x_1+(x_4'+x_5'+\cdots+x_{h-4}')+ 2(x_3'+x_4'+\cdots+x_{h-3}') + 3(x_2'+x_3'+\cdots+x_{h-2}')\\&\ \ \ \ 
            + 2(x_1'+x_2'+\cdots+x_{h-1}') + (x_0'+x_1'+\cdots+x_h')+x_{h-1}\\
            &=x_1+x_2+\cdots+x_{h-2}+x_{h-1}.
          \end{align*} Suppose now that $h-j<j$. We leave the case $h-j=j-1$ to the reader. Suppose that $h-j+1\leq j-1$. We have $z=(x_1,x_2,\ldots,x_{2h-2j+2})$. We deal with the case when $2j=h+2$ case and leave out the rest. Similar calculation shows
          \begin{align*}
            \mu_{j-1,j}&=(x_3'+x_4'+\cdots+x_{2j-3}')+ 2(x_2'+x_3'+\cdots+x_{2j-2}') + 3(x_1'+x_2'+\cdots+x_{2h-2j+2}')
            \\&\ \ \ \   + 2(x_0'+x_1'+\cdots+x_{2h-2j+1}') + (x_1'+x_2'+\cdots+x_{2h-2j}')\\
            &=(x_3'+x_4'+\cdots+x_{h-1}')+ 2(x_2'+x_3'+\cdots+x_{h}') + 3(x_1'+x_2'+\cdots+x_{h}')
            \\&\ \ \ \   + 2(x_0'+x_1'+\cdots+x_{h-1}') + (x_1'+x_2'+\cdots+x_{h-2}')\\
            &=(x_4'+\cdots+x_{h-4}')+ 2(x_3'+\cdots+x_{h-3}') + 3(x_2'+\cdots+x_{h-2}')
            \\&\ \ \ \   + 2(x_1'+\cdots+x_{h-1}') + (x_0'+x_1'+x_2'+\cdots+x_{h-1}'+x_h')\\
            &=x_1+x_2+\cdots+x_{h-1}+x_{h}.
          \end{align*} 
      \end{enumerate}
\end{proof}

The following example illustrates our Proposition \ref{P:propsymm}.

\begin{eg}
  Let $p=11$. The total values of the $t$th propagations from the epicenter $x_3$ presented in Example \ref{Eg:p11Prop} are precisely the entries in the `third' column (or row) of $\M_2^3$ in Example \ref{Eg:p11M2}. Below, we also compute the $t$th propagation $z$ in $x$ from the epicenter $x_2$ and their values. The values are then precisely the `second' column (or row) of $\M_2^3$. 
  \[\begin{tabular}{ccc}
  \hline 
  $t$&$z$&$T$\\
  \hline 
  0&(40)&40\\
  1&(36,40,29)&105\\
  2&(15,36,40,29,14)&134\\
  3&(36,40,29,14)&119\\
  4&(40,29) &69\\
  \hline
\end{tabular}\]
\end{eg}

\section{Stability when $p$ tends to infinity}\label{S:Stability}

In this section, we consider the stability question when the underlying prime goes to infinity. Since the prime $p$ varies, we highlight it for the simple module $D_k$ by using the notation $D_{k,p}$. Similarly, we use $\M_{k,p}$ and $\Rect_p(i,j)$ for $\M_k$ and $\Rect(i,j)$ respectively. The main result for this section is Theorem \ref{T:pToinfinity}.

To prove our result, we need the following lemmas comparing the powers of the matrices $\M_{k,p}$ and $\M_{k,q}$ whenever $q\geq p$. 

\begin{lem}\label{L:D Stability}
  Let $k\in\NN_0$. If $p$ is a prime such that $k=\min\{k,p-2-k\}$ and $\ell \in [0,\frac{p-3}{2}-k]$, then, for all prime $q\geq p$, $i\in [0,k]$ and $j \in [0,\frac{p-3}{2}]$, we have \[[D_{2i,q}\otimes D_{2\ell,q}:D_{2j,q}]=[D_{2i,p}\otimes D_{2\ell,p}:D_{2j,p}].\] 
\end{lem}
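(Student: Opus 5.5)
By Theorem \ref{T:StableGreenRing}, for any prime $q$ we have $[D_{2i,q}\otimes D_{2\ell,q}:D_{2j,q}]=1$ if $2j\in\Rect_q(2\ell,2i)$ and $0$ otherwise. The multiplicity $[V:W]$ is taken of the indecomposable $W$ in $V$ modulo projectives, and $\Rect$ is a set, so the multiplicities are $0$ or $1$. Note also that $D_{2\ell,p}\otimes D_{2i,p}\cong D_{2i,p}\otimes D_{2\ell,p}$. The plan is therefore to show that the sets $\Rect_q(2\ell,2i)$ and $\Rect_p(2\ell,2i)$ coincide, or at least agree on the relevant range $2j\le p-3$.

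I will do this by computing the left and right boundaries explicitly. Here $\ell_{2\ell,2i}=|2i-2\ell|$, which does not depend on the prime at all. For the right boundary, the first branch $r=2i+2\ell$ applies whenever $2\ell\le q-2-2i$.

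First check the hypotheses. The assumption $k=\min\{k,p-2-k\}$ gives $k\le p-2-k$, hence $2k\le p-3$ since $p$ is odd. The assumption $\ell\le \frac{p-3}{2}-k$ gives $2\ell+2k\le p-3$. Hence for $i\le k$ we have $2i+2\ell\le p-3\le q-3<q-2$, so $2\ell\in[0,q-2-2i]$. The first branch of $r_{2\ell,2i}$ therefore applies for both $p$ and $q$, and $r_{2\ell,2i}=2i+2\ell$ in both cases.

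We also need $2\ell$ and $2i$ to lie in $[0,p-2]$, so that the rectangle is defined for $p$. This holds, since both are at most $p-3$.

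Consequently
\[\Rect_p(2\ell,2i)=\{|2i-2\ell|+2c: 0\le c\le \min(i,\ell)\}=\Rect_q(2\ell,2i).\]
Both sets lie inside $[0,p-3]$. So for every $j\in[0,\frac{p-3}{2}]$, membership of $2j$ is the same for $p$ and for $q$, and the multiplicities agree.

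The only delicate point is the bookkeeping of which branch of $r_{i,j}$ applies. One should be careful that the formula $\Rect(i,j)$ is read as level $i$ of the $j$-rectangle, with the roles of $i$ and $j$ as in Theorem \ref{T:StableGreenRing}. If the convention is instead $\Rect(2i,2\ell)$, the same computation works by symmetry: the boundary $|2i-2\ell|$ is symmetric, and the first-branch condition $2i\le q-2-2\ell$ is the same inequality. So the argument is insensitive to this convention.
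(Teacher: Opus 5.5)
Your proof is correct and is essentially the paper's argument. The hypotheses give $2i+2\ell\le p-3\le q-3$, so the relevant layer lies where $r_{2\ell,2i}=2i+2\ell$ and $\ell_{2\ell,2i}=|2i-2\ell|$ independently of the prime, hence $\Rect_p(2\ell,2i)=\Rect_q(2\ell,2i)$; the paper phrases this as the $2i$th layers of the $2\ell$-rectangles for $p$ and $q$ coinciding.

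One small slip is in your displayed set. Since $(r_{2\ell,2i}-\ell_{2\ell,2i})/2=\min(2i,2\ell)$, the index should run over $0\le c\le 2\min(i,\ell)$; for example, $\Rect(2,2)=\{0,2,4\}$. This does not affect the conclusion, because only the equality of the two boundaries is used.
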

\begin{proof}
  Recall that $[D_{2i}\otimes D_{2\ell}:D_{2j}]\neq 0$ if and only if $2j \in \Rect(2i,2\ell)$. Let $p$ be such a prime. When $\ell \in [0,\frac{p-3}{2}-k]$ and $i\in [0,k]$, we have $2i\leq 2k\leq p-3-2\ell$ and therefore $2i\leq q-3-2\ell$. In this case, the $2i$th layers of the $2\ell$-rectangles for $p$ and $q$ coincide. It can be depicted as below where the darker rectangle is for $p$ and the other for $q$.
\[\small{
\begin{tikzpicture}[xscale=.35,yscale=.35]
\fill[blue,rounded corners=1mm,opacity=0.2] (0,0)--(4,4)--(10,-2)--(6,-6)--(0,0)--(4,4)--cycle;
\fill[blue,rounded corners=1mm,opacity=0.2] (0,0)--(4,4)--(13,-5)--(9,-9)--(0,0)--(4,4)--cycle;

\draw[dashed] (4,4)--(4,5);
\node at (4,5.5) {$2\ell$};
\draw[dashed] (10,-2)--(10,5);
\node at (10,5.5) {$p-2$};
\draw[dashed] (13,-5)--(13,5);
\node at (13,5.5) {$q-2$};
\draw[dashed] (0,-2)--(14,-2);
\node at (-3,-2) {$p-2-2j$};
%\draw[decorate, decoration={brace,mirror,amplitude=10pt,raise = 0pt},thick] (0,4)--(0,-1);
\draw[dashed] (0,-1)--(14,-1);
\node at (-1.5,-1) {$2i$};
\draw[dashed] (0,4)--(14,4);
\node at (-1.5,4) {$0$};
\draw[dashed] (0,-5)--(14,-5);
\node at (-3,-5) {$q-2-2j$};
\end{tikzpicture}}\]
\end{proof}

\begin{lem}\label{L:MpMqn=1}
  Let $k\in\NN_0$ and $p$ be a large enough prime such that $k=\min\{k,p-2-k\}$ and $\frac{p-3}{2}-k\geq 0$. For $\ell\in [0,\frac{p-3}{2}-k]$, we have the following statements. 
  \begin{enumerate}[(i)]
    \item We have $(\M_{k,p})_{\ell,\ell+k}\neq 0$ but $(\M_{k,p})_{\ell,j}= 0$ for all $\ell+k<j\leq \frac{p-3}{2}$, that is, the rightmost nonzero entry in the $\ell$th row of $\M_{k,p}$ is $\ell+k$. 
    \item For any prime $q\geq p$, we have \[(\M_{k,q})_{\ell,j}=\left \{\begin{array}{ll} (\M_{k,p})_{\ell,j}& \text{if $j\leq \frac{p-3}{2}$,}\\ 0&\text{otherwise.}\end{array}\right .\]
  \end{enumerate}
\end{lem}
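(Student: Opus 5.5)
We work directly from Definition \ref{D:Mvnk}. Since $k=\min\{k,p-2-k\}$, and for $q\geq p$ also $k\leq q-2-k$, we have $m=k$ for both primes, so
\[(\M_{k,p})_{\ell,j}=\sum_{i=0}^{k}[D_{2i,p}\otimes D_{2\ell,p}:D_{2j,p}],\]
and similarly for $q$. By Theorem \ref{T:StableGreenRing}, each summand is $1$ if $2j\in\Rect_p(2i,2\ell)$ and $0$ otherwise. So everything reduces to understanding the sets $\Rect(2i,2\ell)$ for $i\in[0,k]$ and $\ell\in[0,\frac{p-3}{2}-k]$.

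For part (i), the hypothesis on $\ell$ gives $2i\leq 2k\leq p-3-2\ell=p-2-2\ell-1$. Hence the level $2i$ satisfies $2i\in[0,p-2-2\ell]$, which puts it in the first branch of $r_{2i,2\ell}$. Therefore $r_{2i,2\ell}=2\ell+2i$ and $\ell_{2i,2\ell}=|2\ell-2i|$, and $\Rect_p(2i,2\ell)=\{|2\ell-2i|,|2\ell-2i|+2,\dots,2\ell+2i\}$. The largest element of the union over $i\in[0,k]$ is $2\ell+2k$, attained only at $i=k$. It follows that $(\M_{k,p})_{\ell,\ell+k}\geq 1$ and $(\M_{k,p})_{\ell,j}=0$ for $j>\ell+k$. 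Note that $\ell+k\leq\frac{p-3}{2}$, so this index exists in the matrix.

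For part (ii), fix $q\geq p$. The same inequality $2i\leq q-3-2\ell$ holds, which is exactly the content of Lemma \ref{L:D Stability}. So the computation above is independent of the prime: $\Rect_q(2i,2\ell)=\Rect_p(2i,2\ell)$ as sets of integers. Two cases follow.
\begin{itemize}
\item For $j\leq\frac{p-3}{2}$, summing Lemma \ref{L:D Stability} over $i\in[0,k]$ gives $(\M_{k,q})_{\ell,j}=(\M_{k,p})_{\ell,j}$.
\item For $\frac{p-3}{2}<j\leq\frac{q-3}{2}$, we have $j>\ell+k$, because $\ell+k\leq\frac{p-3}{2}$. Then $2j>2\ell+2i$ for every $i\in[0,k]$, so $2j\notin\Rect_q(2i,2\ell)$, and the entry is $0$.
\end{itemize}

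The only real care needed is at the boundaries. We must confirm that $m=k$ for both primes, and that the level $2i$ never reaches the branch $i\geq p-j-1$ of $r_{i,j}$, where the rectangle reflects. Both follow from the single inequality $2k+2\ell\leq p-3$. No case analysis beyond this seems necessary.
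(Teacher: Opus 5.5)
Your proof is correct and follows essentially the paper's argument: in (i) you read off the boundaries of the rectangle at level $2i\leq p-3-2\ell$, and in (ii) you sum Lemma \ref{L:D Stability} over $i\in[0,k]$ (having checked that $m=k$ for both $p$ and $q$) and use the vanishing beyond column $\ell+k$. One small fix: by Definition \ref{D:Mvnk}, your sum $\sum_{i=0}^{k}[D_{2i}\otimes D_{2\ell}:D_{2j}]$ is literally the $(j,\ell)$-entry, not the $(\ell,j)$-entry, so you should invoke Lemma \ref{L:MkSymm} at the outset, as the paper does in part (ii). (The paper's part (i) keeps the definitional orientation and tests $2\ell$ against the left boundary $2j-2i$ of the $2j$-rectangle, which is the mirror image of your test of $2j$ against the right boundary $2\ell+2i$.)
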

\begin{proof}
  Recall that $(\M_k)_{\ell,j}=\sum_{i=0}^m[D_{2i}\otimes D_{2j}:D_{2\ell}]$. For part (i), when $j=\ell+k$, take $i=m=k$, then $[D_{2k,p}\otimes D_{2(\ell+k),p}:D_{2\ell,p}]=1\neq 0$ because $2\ell\in \Rect_p(2k,2(\ell+k))$. So $(\M_{k,p})_{\ell,\ell+k}\neq 0$. Suppose now $j>\ell+k$. For any $i\in [0,m]$, $2\ell\not\in \Rect_p(2i,2j)$ because $2\ell<2j-2k=2j-2m\leq 2j-2i$. So $[D_{2i,p}\otimes D_{2j,p}:D_{2\ell,p}]=0$. Hence $(\M_{k,p})_{\ell,j}=0$.
  
  We now prove part (ii). Let $q \geq p$ and $j \leq \frac{p-3}{2}$. By definition, Lemma \ref{L:MkSymm} and Lemma \ref{L:D Stability},
  \begin{align*}
  	(\M_{k,q})_{\ell,j} &= (\M_{k,q})_{j,\ell}= \sum_{i=0}^k [D_{2i,q}\otimes D_{2\ell,q}:D_{2j,q}]  = \sum_{i=0}^k [D_{2i,p}\otimes D_{2\ell,p}:D_{2j,p}]\\
   &= (\M_{k,p})_{j,\ell} = (\M_{k,p})_{\ell,j}. \end{align*}
  For $j \in [\frac{p-3}{2}+1, \frac{q-3}{2}]$, we have $j > \frac{p-3}{2} \geq \ell+k$. By part (i), $(\M_{k,q})_{\ell,j}=0$.
\end{proof}

To compare the tensor powers of $D_{k,p}$ and $D_{k,q}$, we need to compare the powers of $\M_{k,p}$ and $\M_{k,q}$. We get the following generalised version of Lemma \ref{L:MpMqn=1}:

\begin{lem}\label{L:MpMqGeneral}
	 Let $k,b\in\NN_0$ with $b\geq 1$, $p$ be a large enough prime such that $k=\min\{k,p-2-k\}$ and $\frac{p-3}{2}-bk\geq 0$ and let $q\geq p$. Furthermore, let $\ell \in [0,\frac{p-3}{2} - bk]$.
	\begin{enumerate}[(i)]
		\item The rightmost nonzero entry in the $\ell$th row of $\M_{k,p}^b$ is in $\ell+bk$.
	 \item We have \[(\M_{k,q}^b)_{\ell,j}=\left \{\begin{array}{ll} (\M_{k,p}^b)_{\ell,j}& \text{if $j\leq \frac{p-3}{2}$,}\\ 0&\text{otherwise.}\end{array}\right .\]
	 \end{enumerate}
\end{lem}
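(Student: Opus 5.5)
We argue by induction on $b$, with Lemma \ref{L:MpMqn=1} as the base case $b=1$. The idea is that $\M_{k,p}$ is a symmetric band matrix of half-bandwidth $k$: by Lemma \ref{L:MpMqn=1}(i) and Lemma \ref{L:MkSymm}, for $\ell\in[0,\frac{p-3}{2}-k]$ the nonzero entries of row $\ell$ lie in columns $j\le \ell+k$, with $(\M_{k,p})_{\ell,\ell+k}\neq 0$. Powers of such a matrix widen the band by $k$ at each step, and the top-left block is unaffected by the size of the matrix as long as one stays away from the bottom-right boundary.

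Suppose the statement holds for $b-1$, where $\frac{p-3}{2}-bk\ge 0$. Fix $\ell\in[0,\frac{p-3}{2}-bk]$ and write
\[(\M_{k,p}^b)_{\ell,j}=\sum_{c}(\M_{k,p})_{\ell,c}(\M_{k,p}^{b-1})_{c,j}.\]
Since $\ell\le \frac{p-3}{2}-k$, Lemma \ref{L:MpMqn=1}(i) restricts the sum to $c\le \ell+k$, and every such $c$ satisfies $c\le \frac{p-3}{2}-(b-1)k$. Hence the induction hypothesis applies to each row $c$. For part (i), each term with $c\le\ell+k$ vanishes once $j>c+(b-1)k$, so every entry with $j>\ell+bk$ is zero. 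For $j=\ell+bk$, only $c=\ell+k$ contributes, and its contribution is $(\M_{k,p})_{\ell,\ell+k}(\M_{k,p}^{b-1})_{\ell+k,\ell+bk}$. This is a product of two nonzero factors, and it is positive because all entries of $\M_k$ are nonnegative integers.

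For part (ii), expand $(\M_{k,q}^b)_{\ell,j}$ in the same way. Lemma \ref{L:MpMqn=1}(ii) shows that row $\ell$ of $\M_{k,q}$ agrees with row $\ell$ of $\M_{k,p}$ on the columns $c\le\frac{p-3}{2}$ and is zero beyond them. So the sum runs over the same indices $c\le \ell+k$ with the same coefficients. For each such $c$, the induction hypothesis (ii) says that $(\M_{k,q}^{b-1})_{c,j}$ equals $(\M_{k,p}^{b-1})_{c,j}$ when $j\le\frac{p-3}{2}$ and is $0$ otherwise. Summing over $c$ gives the claim.

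The only real bookkeeping, and the step most likely to go wrong, is checking the index ranges: every intermediate $c$ must lie in the range where the induction hypothesis applies, i.e.\ $c\le\ell+k\le \frac{p-3}{2}-(b-1)k$. This holds precisely because $\ell\le\frac{p-3}{2}-bk$, which explains the hypothesis $\frac{p-3}{2}-bk\ge0$. One also has to confirm that the hypotheses on $p$ (namely $k=\min\{k,p-2-k\}$) persist for $q\ge p$, so that Lemma \ref{L:MpMqn=1} and Lemma \ref{L:D Stability} apply at $q$ with the same $m=k$.
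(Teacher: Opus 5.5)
Your proposal is correct and is essentially the paper's own argument. Like the paper, you induct on $b$ with Lemma~\ref{L:MpMqn=1} as base case, expand $\M^b=\M\cdot\M^{b-1}$, cut the sum to $c\le\ell+k$, apply the inductive hypothesis to rows $c\le\frac{p-3}{2}-(b-1)k$, and use nonnegativity of the entries to get $(\M_{k,p}^b)_{\ell,\ell+bk}\neq 0$. Your closing check that the hypotheses on $p$ persist for $q$ (and the appeal to Lemma~\ref{L:D Stability}) is harmless but unnecessary, since Lemma~\ref{L:MpMqn=1}(ii) is already stated for every prime $q\geq p$.
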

\begin{proof}
	\begin{enumerate}[(i)]
		\item The base case $b=1$ is true by part (i) of Lemma \ref{L:MpMqn=1}. We write
		\[(\M_{k,p}^b)_{\ell,j} = \sum_{t=0}^{\frac{p-3}{2}} (\M_{k,p})_{\ell,t} (\M_{k,p}^{b-1})_{t,j}.\]
		For this to be nonzero, we require $t$ such that $t \leq \ell + k$ and $j \leq t + (b-1)k$ (by induction hypothesis). Combining these inequalities, we get $j \leq \ell + bk$.	The rightmost nonzero entry in the $\ell$th row of $\M_{k,p}$ is in $\ell+k$. By induction hypothesis, the rightmost nonzero entry in the $(\ell+k)$th row of $\M_{k,p}^{b-1}$ is in $\ell+k+(b-1)k = \ell+bk$. Thus, the summand $(\M_{k,p})_{\ell,\ell+k} (\M_{k,p}^{b-1})_{\ell+k,\ell+bk}$ is nonzero. Since the matrices have nonnegative integer entries, this implies the entry $(\M_{k,p}^b)_{\ell,\ell+bk}$ is not zero.
		\item Fix $k,b\in\NN_0$ and $q\geq p$ as in the hypothesis. Let $P(r)$ be the assertion that for any $\ell\in [0,\frac{p-3}{2}-rk]$, we have \[(\M_{k,q}^r)_{\ell,j}=\left \{\begin{array}{ll} (\M_{k,p}^r)_{\ell,j}& \text{if $j\leq \frac{p-3}{2}$,}\\ 0&\text{otherwise.}\end{array}\right .\] We argue by induction on $r\leq b$. The base case $r=1$ is given in Lemma \ref{L:MpMqn=1}(ii). Assume that $P(r)$ is correct and $2 \leq r+1\leq b$. Let $\ell\in [0,\frac{p-3}{2}-(r+1)k]$. We have 
  \begin{align*}\label{Eq:IndMq}
  (\M_{k,q}^{r+1})_{\ell,j}=\sum_{x=0}^{\frac{q-3}{2}} (\M_{k,q})_{\ell,x}(\M_{k,q}^r)_{x,j}&= \sum_{x=0}^{\ell+k} (\M_{k,q})_{\ell,x}(\M_{k,q}^r)_{x,j}\\
  &=\left \{\begin{array}{ll} \sum_{x=0}^{\ell+k} (\M_{k,p})_{\ell,x}(\M_{k,p}^r)_{x,j}&\text{if $j\leq\frac{p-3}{2}$,}\\ 0&\text{otherwise,}\end{array}\right .
  \end{align*} where the second equality follows from Lemma \ref{L:MpMqn=1}(i) because $\ell+k\leq \frac{p-3}{2}-k\leq \frac{q-3}{2}$ and the third equality follows from induction hypothesis and Lemma \ref{L:MpMqn=1}(ii) because $x\leq \ell+k\leq \frac{p-3}{2}-rk\leq \frac{p-3}{2}$. We focus only on the case $j\leq \frac{p-3}{2}$. Similarly, since $\ell+k\leq\frac{p-3}{2}$ and using Lemma \ref{L:MpMqn=1}(i), we have \[(\M_{k,p}^{r+1})_{\ell,j}=\sum_{x=0}^{\frac{p-3}{2}} (\M_{k,p})_{\ell,x}(\M_{k,p}^r)_{x,j}= \sum_{x=0}^{\ell+k} (\M_{k,p})_{\ell,x}(\M_{k,p}^r)_{x,j}.\] The proof is now complete. 
  \end{enumerate}
\end{proof}

We can now state the stability result in this section. 

\begin{thm}\label{T:pToinfinity}
  Let $k,n\in\NN_0$ with $n\geq 1$. There is a large enough prime $p$ such that, for any $r \in [0,2p-3]$ and prime $q\geq p$ and $t\in[0,q-2]$, we have \[[(\Omega^r(D_{k,q}))^{\otimes n}:\Omega^{nr}(D_{t,q})]=\left \{\begin{array}{ll} [(\Omega^r(D_{k,p}))^{\otimes n}:\Omega^{nr}(D_{t,p})]& \text{if $t\leq p-2$,}\\ 0&\text{if $t>p-2$.}\end{array}\right .\] In particular, for $k,t,n\in\NN_0$, the following limit exists:
  \[\lim_{\substack{p \to \infty \\ \text{$p$ is prime}}} [D_{k,p}^{\otimes n} : D_{t,p}].\]
\end{thm}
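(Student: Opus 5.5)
The plan is to reduce everything to the even case and the matrix powers $\M_{k,p}^b$, and then invoke Lemma \ref{L:MpMqGeneral} with $\ell=0$.

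\emph{Reduction.} Since $\Omega^r$ commutes with direct sums modulo projectives and $(\Omega^r V)\otimes(\Omega^r W)\equiv \Omega^{2r}(V\otimes W)$, we have $(\Omega^r(D_k))^{\otimes n}\equiv \Omega^{nr}(D_k^{\otimes n})$. The modules $\Omega^{nr}(D_t)$ for distinct $t$ are pairwise non-isomorphic, so it suffices to treat $r=0$, that is, to compare $[D_{k,q}^{\otimes n}:D_{t,q}]$ with $[D_{k,p}^{\otimes n}:D_{t,p}]$.

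\emph{Choice of $p$.} Write $n=2b$ or $n=2b+1$. Choose $p$ so large that $k\leq p-2-k$ (hence $m=k$ for $p$ and for every $q\geq p$) and $\frac{p-3}{2}-bk\geq k+1$. This leaves some slack for the odd step.

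\emph{Even case $n=2b$.} By Theorem \ref{T:PowersofMk}, the multiplicity $[D_{k,q}^{\otimes 2b}:D_{2j,q}]$ equals $(\M_{k,q}^b)_{j,0}$, which equals $(\M_{k,q}^b)_{0,j}$ by Lemma \ref{L:MkSymm}. Apply Lemma \ref{L:MpMqGeneral}(ii) with $\ell=0$; this is allowed since $0\leq \frac{p-3}{2}-bk$. It gives $(\M_{k,q}^b)_{0,j}=(\M_{k,p}^b)_{0,j}$ for $j\leq\frac{p-3}{2}$, and $0$ otherwise. Odd $t$ contribute zero on both sides by Proposition \ref{P:reduction}. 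For even $t=2j$ we have $2j\leq p-2$ if and only if $j\leq \frac{p-3}{2}$. So the formula holds for even $n$.

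\emph{Odd case $n=2b+1$.} Use Proposition \ref{P:reduction}: $[D_{k}^{\otimes(2b+1)}:D_t]=\sum_{t\in\Rect(k,i)}a_i$, where $a_i=[D_k^{\otimes 2b}:D_i]$. By the even case, $a_i$ agrees for $p$ and $q$ when $i\leq p-2$, and vanishes for $i>p-2$. Moreover, part (i) of Lemma \ref{L:MpMqGeneral} shows the nonzero $a_i$ are supported on $i\leq 2bk$.

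The key step, and the main obstacle, is to show that for $i\leq 2bk$ the row of the $i$-rectangle at level $k$ is the same for $p$ and $q$. Here $\Rect_p(k,i)=\Rect_q(k,i)$ holds because $k\leq p-2-i$. Indeed, $\ell_{k,i}$ does not depend on $p$, and $r_{k,i}=k+i$ as long as $k\in[0,p-2-i]$. This is guaranteed by $i+k\leq 2bk+k\leq p-3$, which follows from our choice of $p$. Consequently, every $t$ with $b_t\neq0$ satisfies $t\leq i+k\leq p-3$. Hence $b_t$ agrees for $p$ and $q$, and vanishes when $t>p-2$, which completes the odd case.

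\emph{Limit.} The final assertion follows immediately. For fixed $k,t,n$, the sequence $[D_{k,q}^{\otimes n}:D_{t,q}]$ is constant for all primes $q\geq p$, since $t\leq p-2$ once $p$ is large (and it equals $0$ if $t>p-2$ is never possible for large $p$). The degenerate case $n=0$, where $D_k^{\otimes 0}=D_0$, is trivial.
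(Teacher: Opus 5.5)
Your proposal is correct and follows the paper's proof essentially step for step. You reduce to $r=0$, handle $n=2b$ via Theorem \ref{T:PowersofMk}, Lemma \ref{L:MkSymm} and Lemma \ref{L:MpMqGeneral}(ii) with $\ell=0$, and handle $n=2b+1$ via Proposition \ref{P:reduction}, the support bound from Lemma \ref{L:MpMqGeneral}(i), and the coincidence $\Rect_p(k,i)=\Rect_q(k,i)$ for $i+k\leq p-3$. The only minor difference is that you dispose of $t>p-2$ in the odd case uniformly through the bound $t\leq i+k\leq p-3$, whereas the paper splits into $k$ even (reading off the $\frac{k}{2}$th column of $\M_{k,q}^b$) and $k$ odd; your argument shows that split is unnecessary.
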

\begin{proof}
	We may assume $r=0$. The general case can be obtained by applying $\Omega^{nr}$.

  We first consider $n=2b$. Choose $p$ large enough such that it satisfies the hypothesis for Lemma \ref{L:MpMqGeneral} with respect to $k$ and $b$, that is $k=\min\{k,p-2-k\}$ and $\frac{p-3}{2}-bk\geq 0$ so $p \geq 2bk+3 = nk + 3$. Let $t\in[0,q-2]$, $y=[D_{k,q}^{\otimes n}:D_{t,q}]$ and, if $t\leq p-2$, $x=[D_{k,p}^{\otimes n}:D_{t,p}]$. If $t$ is odd, then $x=0=y$ by Proposition \ref{P:reduction}. Suppose now that $t=2j\leq p-2$ is even. We have 
  \begin{align*}
    x=(\M_{k,p}^b)_{j,0}=(\M_{k,p}^b)_{0,j}= (\M_{k,q}^b)_{0,j}=(\M_{k,q}^b)_{j,0}=y,
  \end{align*} where the first and last equalities follow from Theorem \ref{T:PowersofMk}, the second and penultimate equalities follow from Lemma \ref{L:MkSymm} and the third equality follows from Lemma \ref{L:MpMqGeneral}(ii). For $t=2j>p-2$, we have $y=(\M_{k,q}^b)_{j,0}=0$ by Lemma \ref{L:MpMqGeneral}(ii) again. 
  
  Suppose now that $n=2b+1$. Again, choose $p \geq nk +3$. By the previous paragraph, we have  $D_{k,p}^{\otimes 2b}\equiv\bigoplus_{0\leq 2i\leq p-3}a_{2i}D_{2i,p}$ and $D_{k,q}^{\otimes 2b}\equiv\bigoplus_{0\leq 2i\leq q-3}b_{2i}D_{2i,q}$ where $a_{2i}=b_{2i}$ if $2i\leq p-3$ and $b_{2i}=0$ if $2i>p-3$. Let $t\leq p-2$. We have  
  \begin{align*}
    [D_{k,p}^{\otimes n}:D_{t,p}]=\sum_{t\in\Rect_p(k,2i)}a_{2i}= \sum_{t\in\Rect_p(k,2i)}b_{2i},
  \end{align*} where the first equality is obtained using Proposition \ref{P:reduction}. On the other hand, \[[D_{k,q}^{\otimes n}:D_{t,q}]=\sum_{t\in\Rect_q(k,2i)}b_{2i}.\] To show our result, notice that $b_{2i}$ is the $i$th entry in the $0$th row of $\M_{k,q}^b$, so Lemma \ref{L:MpMqGeneral}(i) asserts that nonzero $b_{2i}$ only occurs for $i\leq bk$. If $t \in \Rect_q(k,2i)$, then $t \leq 2i + k \leq 2bk + k = (n-1)k + k = nk \leq p-3$. Thus $\Rect_p(k,2i)$ and $\Rect_q(k,2i)$ coincide when $b_{2i}$ is nonzero. Combining this with the fact that $b_{2i} = 0$ for all $2i > p-3$. So $\sum_{t\in\Rect_p(k,2i)}b_{2i}= \sum_{t\in\Rect_q(k,2i)}b_{2i}$.

   Suppose that $t>p-2$ and $k$ is even. By Definition \ref{D:Mvnk}, note that $v_1^{(k)}$ has $1$ in the $\frac{k}{2}$th entry and $0$ elsewhere. Since $n = 2b+1$, by Lemma \ref{L:vnk}, we get $v_{n}^{(k)} = v_{2b+1}^{(k)} = \M_{k,q}^b v_1^{(k)}$, so $v_n^{(k)}$ is the $\frac{k}{2}$th column of $\M_{k,q}^b$.  This means that  the decomposition of $D_{k,q}^{\otimes n}$ may be read off the $\frac{k}{2}$th column of $\M_{k,q}^b$. By symmetry of $\M_{k,q}$ (Lemma \ref{L:MkSymm}), this is equivalent to reading off the $\frac{k}{2}$th row. That is, $[D_{k,q}^{\otimes n} : D_{t,q}] = (\M_{k,q}^b)_{k/2, t/2}$ where $t = 2j > p-2$. By assumption on $p$, we have $p-3 \geq nk = (n-1)k + k$, hence \[\frac{p-3}{2} - \frac{n-1}{2} k \geq \frac{k}{2}.\] Note that $b = \frac{n-1}{2}$, so $\frac{k}{2} \in [0, \frac{p-3}{2} - bk]$. Furthermore, $t > p-2$, so $j > \frac{p-3}{2}$. Applying Lemma \ref{L:MpMqGeneral}(ii) yields $[D_{k,q}^{\otimes n} : D_{t,q}] = 0$.

  Suppose now that $k$ is odd. For $t\in\Rect_q(k,2i)$ and $b_{2i}\neq 0$, we require that $t\leq 2i+k$ and by Lemma \ref{L:MpMqGeneral}(i), we also need $i \leq bk$. This yields $2i \leq 2bk  = (n-1)k$ and $t \leq  (n-1)k+ k = nk$. But $p \geq nk +3$ so this is a contradiction, since $t > p-2$ implies that $t > nk+3-2 = nk+1 > nk$.
\end{proof}

We end this paper with an example illustrating Theorem \ref{T:pToinfinity}. 

\begin{eg}
  Consider $D_{2,p}^{\otimes 6}$, that is, $k=2$, $n=6=2b$. For a prime $p$ to meet the hypothesis of Lemma \ref{L:MpMqGeneral}, we require that $p\geq 17$. Let $p=17$ and $q=19$. The lemma asserts that, for all $\ell\in [0,1]$, we have $(\M_{2,19}^3)_{\ell,j}$ is $(\M_{2,17}^3)_{\ell,j}$ if $j\in [0,7]$ and 0 otherwise. Indeed, the matrices $\M_{2,17}^3$ and $\M_{2,19}^3$ are given as below.
  \begin{align*}
    &\M_{2,17}^3=\small{\begin{bmatrix}
    		15 & 36 & 40 & 29 & 15 & 5 & 1 & 0 \\
    		36 & 91 & 105 & 84 & 49 & 21 & 6 & 1 \\
    		40 & 105 & 135 & 125 & 90 & 50 & 21 & 6 \\
    		29 & 84 & 125 & 141 & 126 & 90 & 50 & 20 \\
    		15 & 49 & 90 & 126 & 141 & 126 & 89 & 44 \\
    		5 & 21 & 50 & 90 & 126 & 140 & 120 & 69 \\
    		1 & 6 & 21 & 50 & 89 & 120 & 120 & 76 \\
    		0 & 1 & 6 & 20 & 44 & 69 & 76 & 51
    \end{bmatrix}}, \\
 &\M_{2,19}^3=\small{\begin{bmatrix}
 		15 & 36 & 40 & 29 & 15 & 5 & 1 & 0 & 0 \\
 		36 & 91 & 105 & 84 & 49 & 21 & 6 & 1 & 0 \\
 		40 & 105 & 135 & 125 & 90 & 50 & 21 & 6 & 1 \\
 		29 & 84 & 125 & 141 & 126 & 90 & 50 & 21 & 6 \\
 		15 & 49 & 90 & 126 & 141 & 126 & 90 & 50 & 20 \\
 		5 & 21 & 50 & 90 & 126 & 141 & 126 & 89 & 44 \\
 		1 & 6 & 21 & 50 & 90 & 126 & 140 & 120 & 69 \\
 		0 & 1 & 6 & 21 & 50 & 89 & 120 & 120 & 76 \\
 		0 & 0 & 1 & 6 & 20 & 44 & 69 & 76 & 51
 \end{bmatrix}}
  \end{align*} Relaxing the prime $q$, for all $q\geq 17$, we have \[D_{2,q}^{\otimes 6}\equiv 15D_{0,q}\oplus 36D_{2,q}\oplus 40D_{4,q}\oplus 29D_{6,q}\oplus 15D_{8,q} \oplus 5 D_{10,q} \oplus  D_{12,q}.\]
\end{eg}

%\section*{Declarations}

%\subsection*{Conflict of Interest \nopunct}The authors declare no conflict of interest.

\bibliographystyle{amsplain}
\bibliography{bib}
\end{document}